\newtheorem{theorem}{Theorem}[section]
\newtheorem{proposition}[theorem]{Proposition}
\newtheorem{lemma}[theorem]{Lemma}
\newtheorem{corollary}[theorem]{Corollary}
\theoremstyle{definition}
\newtheorem{definition}[theorem]{Definition}
\newtheorem{example}[theorem]{Example}
\newtheorem{remark}[theorem]{Remark}
\def\K{\mathbb{K}}
\def\R{\mathbb{R}}
\def\TR{\mathbb{TR}}
\def\a{\alpha}
\definecolor{dgreen}{rgb}{.2,.6,.2}
\colorlet{darkgreen}{black!30!dgreen}
\begin{document}
\bibliographystyle{plain}

\title{On real tropical bases and real tropical discriminants}
\author{Luis Felipe Tabera}
\date{}

\address{\small \rm  Departamento de Matem\'aticas, Universidad de Cantabria,
39071, Santander, Spain}
\email{taberalf@unican.es}
\thanks{Supported by the Spanish ``Ministerio de Ciencia e Innovacion'' under 
the Project MTM2011-25816-C02-02.}

\begin{abstract}
We explore the concept of real tropical basis of an ideal in the field of real 
Puiseux series. We show explicit tropical bases of zero-dimensional real 
radical ideals, linear ideals and hypersurfaces coming from combinatorial 
patchworking. But we also show that there exist real radical ideals that do not 
admit a tropical basis. As an application, we show how to compute the set of 
singular points of a real tropical hypersurface. i.e. we compute the real 
tropical discriminant.
\end{abstract}
\maketitle

\section{Introduction}\label{sec:introduction}

In this article, we try to develop an analog of the tropical algebra applied to 
the case of real varieties. Our point of view is to define real tropical 
varieties as \emph{non-archimedean amoebas plus signs}. They are the 
restriction to the reals of the complex tropical curves of G. Mikhalkin in 
\cite{Mik05} and a natural extension of combinatorial patchworking. Different 
approaches to real tropical geometry appear, for instance, in the study of 
logarithmic limits of semialgebraic sets in \cite{Alessandrini-logarithmic} or 
the study of initial real radical ideals in \cite{real-radical-tropical}. 
Another approach with nice combinatorial properties is the study of the 
positive part of a tropical variety \cite{trop-pos-grassman}.

We show that, contrary to the complex case, Kapranov's theorem or, more 
generally, the fundamental theorem of tropical geometry does not hold in the 
real case and there may not be tropical bases. cf. \cite{Computing_trop_var}, 
\cite{Kapranov}, \cite{Lifting-Constr}, \cite{Payne-fibers}, 
\cite{Kapranov-EACA}.

However, for sufficiently simple yet interesting varieties, we can compute real 
tropical basis. We include here real radical zero dimensional ideals, linear 
varieties and hypersurfaces constructed using combinatorial patchworking. As an 
application, we are able to describe the singular locus of a real tropical 
hypersurface using an analogue of the techniques introduced in 
\cite{singular-tropical-hypersurfaces}.

The presentation is done over the field of real Puiseux series $\K$. In 
principle, we could work over any real closed field $\mathbb{F}$ provided with 
a nontrivial valuation and most of our results can be translated to this more 
general setting without trouble. However, some results depending on the residue 
field of $\mathbb{F}$ being archimedean or not, see for instance 
Proposition~\ref{prop:fallaennoarquimediano} and 
Example~\ref{ex:puiseuxdepuiseux}.

The paper is structured as follows. In Section~\ref{sec:preliminares} we 
introduce the notation and basic definitions in real tropical geometry. In 
Section~\ref{sec:basis} we show the main results concerning the existence and 
computation of real tropical basis. Finally, in Section~\ref{sec:singular} we 
apply our results to the computation of the singular locus of a real tropical 
hypersurface.

\section{Preliminaries}\label{sec:preliminares}

Let us introduce some notation. Let $\K$ be the valued field of real Puiseux 
series,
\[\mathbb{K}=\bigcup_{n\geq 1} \mathbb{R}((t^{1/n}))\quad v:\K^*\rightarrow 
\mathbb{Q}\subseteq \mathbb{R}.\]
Every element is a power series with real coefficients and rational exponents 
with bounded denominator
\[p=\sum_{i\geq r_0} a_i t^{i/n},\quad a_{r_0}\neq 0\]
The valuation $v(p)$ of a nonzero power series $p$ is the least exponent $r_0$ 
appearing in the development of the series and the principal coefficient is 
$a_{r_0}$. This principal coefficient is the residue of the series $pt^{-v(p)}$ 
in the residue field $\mathbb{R}$. $\K$ is an ordered field with the order 
given 
by the relation $p>0$ is positive if and only if its principal coefficient 
$a_{r_0}$ is positive. The valuation is compatible with the order in the sense 
that if $0<p<q$ then $v(q)\leq v(p)$. Moreover, since $\K[i]$ is the 
algebraically closed fields of Puiseux series, $\K$ is a real closed field.

\begin{definition}\label{def:real_tropicalization}
The \textbf{real tropicalization} is the valuation taking into account the sign 
of the series:
\[\begin{matrix}trop:&\mathbb{K}^* &\longrightarrow &\TR=\{1,-1\}\times 
\mathbb{R}\\
& x & \mapsto& (s(x),v(x))\end{matrix}\]
where $s(x)$ is the sign function $s:\mathbb{K}^*\rightarrow \{1,-1\}$, $s(x)=1$
if $x>0$ and $s(x)=-1$ if $x<0$. We will sometimes denote by $a^+=(1,a)$ and 
$a^-=(-1,a)\in \TR$. If $x=(p,a)\in \TR$, then $p=s(x)\in\{1,-1\}$ is the 
\textbf{sign} of $x$ and $a=|x|\in\mathbb{R}$ is the \textbf{modulus} of $x$. 
If 
$a=((s_1,a_1),\ldots,(s_n,a_n))\in \TR^n$, we denote by 
$s(a)=(s_1,\ldots,s_n)\in \{1,-1\}^n$, $|a|=(a_1,\ldots,a_n)\in \R^n$, the 
\textbf{sign} 
and \textbf{modulus} taken component-wise.
\end{definition}

\begin{remark}
Note that, while $\TR$ is a group with the \textbf{tropical multiplication} 
$(i,a)\odot(j,b) = (i\cdot j, a+b)$, it is not a tropical semiring, since there 
is not a reasonable definition of addition for $a^+\oplus
a^-$.
\end{remark}

We now define our main geometric objects, real tropical varieties, as the image 
of an algebraic set under the $trop$ map.

\begin{definition}
Let $V\subseteq (\K^*)^n$ be a real variety in the torus. The \textbf{real 
tropicalization} of $V$ is the closure (in $\R^n$) of 
the image $trop(V)\subseteq \TR^n$ of the real points of $V$ under the 
tropicalization map applied component-wise. If $V=\mathcal{V}(I)$ and $I$ is 
generated by polynomials in $\R[x_1,\ldots,x_n]$, then we say that we are 
dealing with the \textbf{constant coefficient case}. 
\end{definition}

This definition is related with taking the non-archimedean amoeba and co-amoeba 
of the set of real points of a real variety (cf. the complex tropical curves 
presented in \cite{Mik05} section 6).

\begin{figure}
\begin{tikzpicture}[xscale=0.55,yscale=0.55]
\draw (-8,0) -- (8,0);
\draw (0,-8) -- (0,8);
\draw[dashed] (-8,8) -- (-1,8);
\draw[dashed] (-8,6) -- (-1,6);
\draw[dashed] (-8,4) -- (-1,4);
\draw[dashed] (-8,2) -- (-1,2);
\draw[dashed] (1,8) -- (8,8);
\draw[dashed] (1,6) -- (8,6);
\draw[dashed] (1,4) -- (8,4);
\draw[dashed] (1,2) -- (8,2);
\draw[dashed] (-8,-8) -- (-1,-8);
\draw[dashed] (-8,-6) -- (-1,-6);
\draw[dashed] (-8,-4) -- (-1,-4);
\draw[dashed] (-8,-2) -- (-1,-2);
\draw[dashed] (1,-8) -- (8,-8);
\draw[dashed] (1,-6) -- (8,-6);
\draw[dashed] (1,-4) -- (8,-4);
\draw[dashed] (1,-2) -- (8,-2);
\draw[dashed] (-8,8) -- (-8,1);
\draw[dashed] (-6,8) -- (-6,1);
\draw[dashed] (-4,8) -- (-4,1);
\draw[dashed] (-2,8) -- (-2,1);
\draw[dashed] (8,8) -- (8,1);
\draw[dashed] (6,8) -- (6,1);
\draw[dashed] (4,8) -- (4,1);
\draw[dashed] (2,8) -- (2,1);
\draw[dashed] (-8,-8) -- (-8,-1);
\draw[dashed] (-6,-8) -- (-6,-1);
\draw[dashed] (-4,-8) -- (-4,-1);
\draw[dashed] (-2,-8) -- (-2,-1);
\draw[dashed] (8,-8) -- (8,-1);
\draw[dashed] (6,-8) -- (6,-1);
\draw[dashed] (4,-8) -- (4,-1);
\draw[dashed] (2,-8) -- (2,-1);
\draw[fill] (4,4) circle(4pt) (-4,4) circle(4pt) (-6,6) circle(4pt);
\draw[fill] (-4,-4) circle(4pt) (-6,-6) circle(4pt) (6,-6) circle(4pt);
\draw[fill] (4,-4) circle(4pt);
\draw[thick] (8,4) -- (4,4) -- (4,8);
\draw[thick] (1.5,1.5) -- (4,4);
\draw[thick] (-6,8) -- (-6,6) -- (-4,4) --(-8,4);
\draw[thick] (8,-6) -- (6,-6) -- (4,-4) --(4,-8);
\draw[thick] (-8,-6) -- (-6,-6) -- (-6,-8);
\draw[thick] (-4,-4) -- (-1.5,-1.5);
\draw (5,4.5) node {\tiny $(0^+,0^+)$};
\draw (-3,4.5) node {\tiny $(0^-,0^+)$};
\draw (-5,6.5) node {\tiny $(1^-,1^+)$};
\draw (-5,-3.5) node {\tiny $(0^-,0^-)$};
\draw (-5,-5.5) node {\tiny $(1^-,1^-)$};
\draw (7,-5.5) node {\tiny $(1^+,1^-)$};
\draw (5,-3.5) node {\tiny $(0^+,0^-)$};
\end{tikzpicture}
\caption{$\mathcal{T}_\R(f)$ with $f = 1^+ \oplus 0^+ v \oplus 0^+ w \oplus 0^- 
v^2 \oplus 0^+ vw \oplus 0^-w^2$}\label{fig:conica_no_triangulation}
\end{figure}
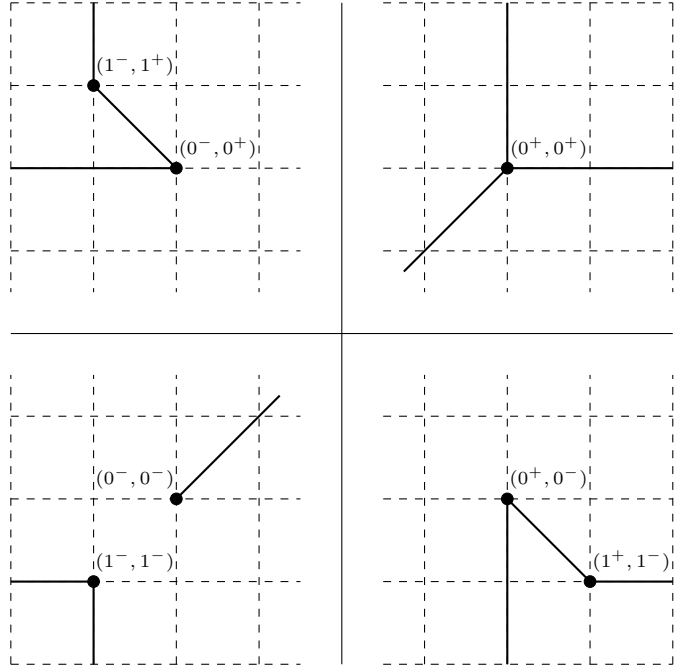

In the algebraically closed case, the tropicalization of a variety $V$
can be computed using Kapranov's theorem, also know as the fundamental theorem
of tropical geometry. The real case seems more involved. The tropicalization in 
the constant coefficient case has been studied in \cite{real-radical-tropical} 
but there is a difference in the tropicalization in the non constant 
coefficient case we are studying here that has its root in P\'olya's Theorem 
\cite{Polyazeros} (See also \cite{positive_polynomial} for a more general 
result).

\begin{theorem}[\cite{Polyazeros}]\label{teo:Polya}
Let $F\in \mathbb{R}[x_1,\ldots, x_n]$ be a homogeneous polynomial that is 
positive in the set $\{x\in \mathbb{R}^n| x_i\geq 0, x_1+\ldots+x_n \neq 0\}$, 
then for $N$ sufficiently large $F\cdot(x_1+\ldots+x_n)^N$ has only positive 
coefficients.
\end{theorem}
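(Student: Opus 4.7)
The plan is to reduce the statement to a uniform approximation argument on the standard simplex. Let $d=\deg F$ and set
$$\Delta = \{y\in\R^n : y_i\geq 0,\ y_1+\cdots+y_n=1\}.$$
By homogeneity, the positivity hypothesis on the punctured nonnegative orthant is equivalent to $F(y)>0$ for every $y\in\Delta$. Since $\Delta$ is compact and $F$ is continuous, there exists $\varepsilon>0$ with $F(y)\geq\varepsilon$ on $\Delta$.

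Next, I would write out the coefficient of a fixed monomial $x^\gamma$ (with $|\gamma|=N+d$) in the product. Expanding $F=\sum_{|\beta|=d}c_\beta x^\beta$ and using the multinomial theorem,
$$a_\gamma := [x^\gamma]\,F\cdot(x_1+\cdots+x_n)^N = \sum_{\substack{|\beta|=d\\ \beta\leq\gamma}} c_\beta\,\frac{N!}{(\gamma-\beta)!}.$$
The key algebraic manipulation is to normalize by the multinomial coefficient $\binom{N+d}{\gamma}=\frac{(N+d)!}{\gamma!}$, which yields
$$\frac{a_\gamma}{\binom{N+d}{\gamma}} = \sum_{|\beta|=d}c_\beta\,\frac{\prod_{i=1}^n \gamma_i(\gamma_i-1)\cdots(\gamma_i-\beta_i+1)}{(N+1)(N+2)\cdots(N+d)}.$$
This is a Bernstein-type expression: numerator and denominator are both products of exactly $d$ linear factors.

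The heart of the proof is then to show that this ratio approximates $F$ at the point $y=\gamma/(N+d)\in\Delta$ uniformly in $\gamma$. Setting $\gamma_i=y_i(N+d)$, each factor $\prod_{j=0}^{\beta_i-1}(\gamma_i-j)$ equals $y_i^{\beta_i}(N+d)^{\beta_i}+O((N+d)^{\beta_i-1})$, while the denominator equals $(N+d)^d(1+O(1/N))$. Collecting terms gives
$$\frac{a_\gamma}{\binom{N+d}{\gamma}} \;=\; F(y) + O(1/N),$$
where the implicit constant depends only on the coefficients $c_\beta$ and on $d$, hence is uniform in $\gamma$. Since $F(y)\geq\varepsilon$ for $y\in\Delta$, once $N$ is large enough that the $O(1/N)$ error is smaller than $\varepsilon/2$, every $a_\gamma$ is bounded below by a positive quantity, completing the proof.

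The only subtle point I anticipate is ensuring uniformity of the error estimate as $\gamma$ ranges over boundary-like configurations of the lattice simplex (where some $\gamma_i$ vanish). However, in those cases the corresponding factors $\prod_j(\gamma_i-j)$ are automatically zero as soon as $\gamma_i<\beta_i$, matching the vanishing of $y_i^{\beta_i}$ in $F(y)$, so no extra care beyond the polynomial identity above is needed.
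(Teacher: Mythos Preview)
Your argument is the classical Bernstein--approximation proof of P\'olya's theorem and is correct. The reduction to the simplex, the explicit formula for $a_\gamma/\binom{N+d}{\gamma}$, and the uniform estimate $F(y)+O(1/N)$ are all standard and sound. One minor quibble: in your last paragraph you say the vanishing of $\prod_j(\gamma_i-j)$ when $\gamma_i<\beta_i$ ``matches'' the vanishing of $y_i^{\beta_i}$, but $y_i^{\beta_i}$ does not actually vanish there, only becomes small. This is harmless, because the uniform $O(1/N)$ bound you need follows directly from the telescoping estimate
\[
\Bigl|\prod_{i,j}\bigl(y_i-\tfrac{j}{N+d}\bigr)-y^\beta\Bigr|\le \frac{d^2}{N+d},
\]
valid for every $y\in\Delta$, without any separate boundary analysis.

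As for comparison with the paper: there is nothing to compare. The paper does not prove Theorem~\ref{teo:Polya}; it merely states it and cites \cite{Polyazeros} (Hardy--Littlewood--P\'olya, \emph{Inequalities}). Your write-up therefore supplies a proof where the paper gives only a reference, and the route you take is exactly the one found in that reference.
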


However, it is well known that this theorem does not hold if the ground field 
is not archimedean (See Example~\ref{ex:univariate_not_Polya}). This translates 
to the fact that tropicalization of polynomials is not so useful in the 
non-constant coefficient case.

We now introduce a different way of defining tropical varieties in terms of a 
real tropical polynomial. The motivation of this alternative approach is to get 
rid of algebraic polynomials and deal only with tropical polynomials, this is a 
common approach in the tropical case over complex Puiseux series.

\begin{definition}
A \textbf{real polynomial} $f\in \TR[w_1,\ldots, w_n]$ is just a formal sum of 
tropical monomials of the form $f=\oplus_{\ell\in A}a_\ell w^\ell\in 
\TR[w_1,\ldots,
w_n]$, $A\subseteq \mathbb{N}^n$. Every real tropical polynomial defines a 
piecewise affine function $f:\mathbb{TR}^n\rightarrow \mathbb{R}$. 
$f(p)=\min\{|a_\ell\ |\ +\langle |p|,\ell\rangle \ |\ \ell\in A\}$.

We define the \textbf{real tropical 
hypersurface} defined by $f$, $\mathcal{T}_\R(f)$ by $p=(p_1,$ $\ldots,$ 
$p_n)\in \mathcal{T_\R}(f)$ if there are two monomials $i\neq j$ such that
\[s(a_i)\prod_{l=1}^ns(p_l)^{l_i}=1,\quad s(a_j)\prod_{l=1}^ns(p_l)^{l_j}=-1\] 
and
\[|a_i|+\langle i,|p|\rangle=|a_j|+\langle j,|p|\rangle\ \leq\ |a_k|+\langle 
k,|p|\rangle\] for all $k\neq i,j$.

That is, if the minimum of $f(p)$ is attained at two different monomial $i,j$ 
where the evaluation at the point $p$ yields two different signs.
\end{definition}

\begin{example}\label{ex:raices-trop}
Let $f=0^+\oplus 1^+w\oplus 0^+w^2\oplus 1^+w^3\oplus2^-w^4$. We have 
associated the piecewise-linear map $p\mapsto \min\{0, 1+p,0+2p, 1+3p, 2+4p\}$. 
The minimum of this piecewise-linear is attained at least twice for $p=0,-1$. 
The value and sign attained in the candidates of roots are:
\begin{center}
\begin{tabular}{|c|c|c|c|c|c|}
     &$0$  &$1+p$&$0+2p$&$1+3p$&$2+4p$\\\hline
$0^+$&$0^+$&$1^+$&$0^+$&$1^+$&$2^-$\\\hline
$0^-$&$0^+$&$1^-$&$0^+$&$1^-$&$2^-$\\\hline
$-1^+$&$0^+$&$0^+$&$-2^+$&$-2^+$&$-2^-$\\\hline
$-1^+$&$0^+$&$0^-$&$-2^+$&$-2^-$&$-2^-$\\\hline
\end{tabular}
\end{center}
Neither $0^+$ nor $0^-$ are real tropical roots of $f$, because the minimum is 
attained at $0^+$ so we do not have two different signs. On the other hand, 
$-1^+$ and $-1^-$ are both real tropical roots, since the minimum is attained 
at different signs $2^+$ and $2^-$. Hence $\mathcal{T}_\R(f)=\{-1^+,-1^-\}$.
\end{example}

In Figure~\ref{fig:conica_no_triangulation}, we show the four tropical quadrants 
of the conic given by the polynomial $f = 1^+ \oplus 0^+ v \oplus 0^+ w \oplus 
0^- v^2 \oplus 0^+ vw \oplus 0^-w^2$. Note that we are working with the $\min$ 
and that the subdivision induced by $f$ is not a triangulation, so the picture 
is different from the usual ones in patchworking.

\begin{definition}
If $F = \sum_{\ell\in A} a_\ell x^\ell\in \mathbb{K}[x_1,\ldots, x_n]$ is a real
polynomial, write \[trop(F)=f=\bigoplus_{\ell\in A} 
(s(a_\ell),v(a_\ell))w^\ell.\]
By abuse of notation, we will write 
\[\mathcal{T}_\R(F)= \mathcal{T}_\R(f)\subseteq \TR^n\]
\end{definition}
Clearly $trop(\mathcal{V}(F))\subseteq \mathcal{T_\R}(F)$ 
(Example~\ref{ex:univariate_Polya}), but contrary to the algebraically closed 
set, the inclusion may be strict. Next lemma shows further discrepancies 
between the real and usual tropical setting.

\begin{lemma}\label{lem:factorization}
Let $F$, $G\in \K[x_1,\ldots,x_n]$, then 
\[\mathcal{T}_\R(F\cdot G) \subseteq \mathcal{T}_\R(F) \cup \mathcal{T}_\R(G)\]
and the inclusion may be strict.
\end{lemma}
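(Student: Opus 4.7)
The plan is to prove the inclusion by contrapositive: suppose $p \notin \mathcal{T}_\R(F) \cup \mathcal{T}_\R(G)$ and deduce $p \notin \mathcal{T}_\R(FG)$. Write $F = \sum_\ell a_\ell x^\ell$, $G = \sum_m b_m x^m$, and $FG = \sum_k c_k x^k$ with $c_k = \sum_{\ell+m=k} a_\ell b_m$; set $f = trop(F)$, $g = trop(G)$. Let $I_F$ denote the set of exponents attaining the minimum in $f$ at $p$, and define $I_G$ analogously. By hypothesis there exist common signs $\epsilon_F, \epsilon_G \in \{\pm 1\}$ such that $s(a_\ell) s(p)^\ell = \epsilon_F$ for every $\ell \in I_F$, and similarly $s(b_m) s(p)^m = \epsilon_G$ for every $m \in I_G$.

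The core computation is to pin down $v(c_k)$ and $s(c_k)$ for each $k$. The standard valuation bound gives $v(c_k) + \langle k, |p|\rangle \geq f(p) + g(p)$, and the pair minimum $\min_{\ell+m=k}(v(a_\ell) + v(b_m)) + \langle k, |p|\rangle$ attains $f(p)+g(p)$ precisely when some $(\ell, m) \in I_F \times I_G$ satisfies $\ell + m = k$. For such a $k$, the summands of $c_k$ of minimal valuation are exactly those $a_\ell b_m$ with $(\ell, m) \in I_F \times I_G$, and a short calculation using $s(a_\ell) = \epsilon_F s(p)^\ell$, $s(b_m) = \epsilon_G s(p)^m$ shows that all their principal coefficients share the common sign $\epsilon_F \epsilon_G s(p)^k$. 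Therefore no cancellation is possible and $v(c_k) + \langle k, |p|\rangle = f(p) + g(p)$ with $s(c_k) s(p)^k = \epsilon_F \epsilon_G$. For every other $k$, the value is strictly larger. Consequently the minimum of $trop(FG)$ at $p$ is reached only by monomials of common sign $\epsilon_F \epsilon_G$, so $p \notin \mathcal{T}_\R(FG)$.

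The main obstacle is precisely that cancellation step: ensuring that $v(c_k)$ matches the tropical prediction rather than jumping up because the principal parts annihilate. This is the one place where the signed hypothesis is really used, and it is also what prevents the reverse inclusion from holding—allowing mixed signs among the monomials attaining $f(p)$ would permit cancellations that inflate $v(c_k)$ and remove $p$ from $\mathcal{T}_\R(FG)$ even though $p \in \mathcal{T}_\R(F)$.

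For strictness, I would provide a short univariate counterexample over $\R \subseteq \K$: take $F = 1 - x$ and $G = 1 + x + x^2$, so $FG = 1 - x^3$. A direct inspection gives $\mathcal{T}_\R(F) = \{0^+\}$, $\mathcal{T}_\R(G) = \{0^-\}$, and $\mathcal{T}_\R(FG) = \{0^+\}$, exhibiting $0^-$ as a point in $\mathcal{T}_\R(F) \cup \mathcal{T}_\R(G)$ that escapes $\mathcal{T}_\R(FG)$.
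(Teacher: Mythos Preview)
Your argument for the inclusion is correct and follows the same contrapositive route as the paper: both show that if every minimizing monomial of $F$ at $p$ has a common sign and likewise for $G$, then the minimizing monomials of $FG$ at $p$ inherit a common sign. The paper streamlines this by first reducing, without loss of generality, to $p=(0^+,\ldots,0^+)$ and to positive coefficients on the minimizing monomials, which lets it skip the bookkeeping with $\epsilon_F,\epsilon_G$ and $s(p)^\ell$; your version keeps $p$ general and is more explicit about the no-cancellation step for $v(c_k)$, which the paper leaves implicit in the phrase ``the monomials of $FG$ where the minimum is attained at $p$ are all of the form $ca_1a_2$ with $c>0$''. For strictness the paper uses a bivariate example ($F=x^{2}y+x^{2}-xy-x+y+1$, $G=xy^{2}-xy+y^{2}+x-y+1$, $FG=x^{3}y^{3}+x^{3}+y^{3}+1$), whereas your univariate pair $F=1-x$, $G=1+x+x^{2}$ is simpler and equally valid.
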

\begin{proof}
Let $p\in \TR^n$ such that $p\notin (\mathcal{T}_\R(F) \cup 
\mathcal{T}_\R(G))$. Without lost of generality, assume that 
$p=(0^+,\ldots,0^+)$. Let $A_1$ (resp $A_2$) be the monomials of $F$ (resp 
$G$) where the minimum of $trop(F)$ (resp. $trop(G))$ is attained at $p$. Then, 
the tropicalization of the monomials of $A_1$ attain the same sign at $p$ and 
so do the monomials in $A_2$. We may also suppose that all these monomials have 
positive coefficients. Then, the monomials of $FG$ where the minimum is 
attained at $p$ are all of the form $ca_1a_2$, with $c\in \K$, $c>0$, $a_1\in 
A_1$, $a_2\in A_2$. It follows that all monomials yield the same sign at $p$ 
and $p\notin \mathcal{T}_\R(FG)$.

To see that the inclusion may be strict, let $F=x^{2} y + x^{2} -  x y -  x + y 
+ 1$, $G=x y^{2} -  x y + y^{2} + x -  y + 1$. Then $FG=x^{3} y^{3} + x^{3} + 
y^{3} + 1$. So $p=(0^+,0^+)$ belongs to both $\mathcal{T}_\R(F)$ and 
$\mathcal{T}_\R(G)$, but not to $\mathcal{T}_\R(FG)$.
\end{proof}

We end this section with an elementary result concerning the number of real 
roots that an 
univariate polynomial admit.

\begin{definition}\label{def:realmultiplicityalaDescartes}
Let $f=\oplus_{\ell\in A} a_\ell w^\ell\in \TR[w]$ be an univariate real 
tropical polynomial. Let $f'=\oplus_{\ell\in A} |a_\ell| w^\ell\in 
\mathbb{T}[w]$ be the usual tropical polynomial obtained from $f$ by forgetting 
signs.

Let $p\in \mathcal{TR}$ with $|p|$  a tropical root of $f'$. Let 
$i_1<\ldots<i_r$ be 
the monomials where $|a_\ell| + \langle |p|, \ell\rangle$ attains its minimum. 
Take the 
sequence of signs
\[S_p=(s(a_{i_1})s(p)^{i_1}, \ldots, s(a_{i_n})s(p)^{i_n})\]
then, the \textbf{complex multiplicity} of $|p|$ in $f'$ is $i_n-i_1$ and the 
\textbf{real multiplicity} of $p$ in $f$ is the number of changes of signs in 
$S_p$.
\end{definition}

\begin{lemma}\label{lem:descartesmanda}
Let $f = \oplus_{\ell\in A}a_\ell w^\ell\in \TR[w]$ be an univariate real 
tropical polynomial of degree $n$. Let $f' = \oplus_{\ell \in A}|a_\ell|w^\ell 
\in \mathbb{T}[w]$ be the (usual) tropical polynomial obtained from $f$ by 
forgetting the signs. Let $p\in \mathbb{T}$ be a (usual) tropical root of $f'$ 
of (usual) multiplicity $m$. Let $m^+$ (resp. $m^-$) be the real tropical 
multiplicity of $p^+$ (resp. $p^-$) as a real tropical root of $f$. Then $m\leq 
m^+ + m^-$ and the difference $m-m^+-m^-$ is an even number.
\end{lemma}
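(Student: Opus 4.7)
My plan is to reduce the lemma to a local combinatorial identity on sign changes between consecutive monomials attaining the tropical minimum at $|p|$. Let $i_1 < i_2 < \cdots < i_r$ denote those monomial exponents, so that by Definition~\ref{def:realmultiplicityalaDescartes} we have $m = i_r - i_1$, and the two relevant sign sequences at $p^+$ and $p^-$ are
\[
S^\pm = \bigl(s(a_{i_1})\, s(p^\pm)^{i_1},\, \ldots,\, s(a_{i_r})\, s(p^\pm)^{i_r}\bigr),
\]
whose numbers of sign changes are $m^+$ and $m^-$ respectively.

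For each pair of consecutive indices $i_k < i_{k+1}$, let $\epsilon_k^{\pm}\in\{0,1\}$ indicate a sign change between the $k$-th and $(k+1)$-st entries of $S^\pm$. Since the entries of $S^-$ are obtained from those of $S^+$ by inserting the extra factor $(-1)^{i_j}$ at position $j$, whether $\epsilon_k^-$ vanishes depends both on whether $s(a_{i_k}) = s(a_{i_{k+1}})$ and on the parity of $i_{k+1}-i_k$. A direct four-case analysis then yields the local identity
\[
\epsilon_k^+ + \epsilon_k^- \leq i_{k+1} - i_k, \qquad \epsilon_k^+ + \epsilon_k^- \equiv i_{k+1} - i_k \pmod{2}.
\]

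Summing over $k = 1,\ldots, r-1$ telescopes the right-hand side to $i_r - i_1 = m$ and the left-hand side to $m^+ + m^-$, which produces the asserted inequality between $m$ and $m^+ + m^-$ and also expresses $|m - m^+ - m^-|$ as a sum of nonnegative even integers, hence even. The only subtle point is the local four-case identity in the middle step; beyond that the argument is a pure telescoping sum and I do not foresee further obstacles.
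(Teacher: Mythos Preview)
Your argument is correct and proves the right inequality $m^{+}+m^{-}\leq m$ together with the parity claim. (Note that the displayed statement contains a typo: the paper's own proof, like yours, establishes $m^{+}+m^{-}\leq m$, not the reverse.) The local four-case check is easily verified: writing $d_k=i_{k+1}-i_k$ and $\sigma_k=s(a_{i_k})s(a_{i_{k+1}})$, one has $\epsilon_k^{+}=\tfrac{1-\sigma_k}{2}$ and $\epsilon_k^{-}=\tfrac{1-\sigma_k(-1)^{d_k}}{2}$, so $\epsilon_k^{+}+\epsilon_k^{-}\in\{0,1,2\}$ has the same parity as $d_k$ and never exceeds $d_k$ since $d_k\geq 1$.

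Your route is genuinely different from the paper's. The paper reduces to the case where all moduli are zero, builds the auxiliary real polynomial $F=\sum_{\ell\in A}s(a_\ell)\,t^{\ell^{2}}x^{\ell}$, and invokes Viro patchworking: for $0<t_0\ll 1$ the specialization $F(t_0,x)$ has exactly $m^{+}$ positive and $m^{-}$ negative real roots, while its total number of nonzero complex roots is $i_r-i_1=m$. The inequality and parity then come from the fact that nonreal roots occur in conjugate pairs. This buys a conceptual interpretation of $m-m^{+}-m^{-}$ as a count of nonreal roots of an honest polynomial, at the cost of importing patchworking. Your approach is purely combinatorial (essentially a localized Descartes' rule identity) and is more elementary and self-contained; it does not explain the difference geometrically, but it needs no external machinery and makes the parity obvious term by term.
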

\begin{proof}
Both multiplicities only depend on the support and signs of the coefficients 
where the minimum is attained when evaluating $f$ at $p$. Hence, we may assume, 
without loss of generality, that all coefficients of $f$ and the tropical root 
have modulus 0. consider the polynomial
\[F=\sum_{\ell\in A}s(a_\ell)\cdot t^{\ell^2}x^\ell\in \R[t][x].\]
By combinatorial patchworking \cite{patchworking}, for any sufficiently small 
evaluation $t_0$ of $t$, $0<t_0<<1$, $F(t=t_0)$ will have exactly $m^+$ 
positive roots and $m^-$ negative roots, $m$ equals the number of nonzero roots 
of $F$, so $m^++m^-\leq m$ and its difference is the number of non-real roots 
of $F$ is even.
\end{proof}

\begin{example}
Let $f=0^+\oplus 1^+w\oplus 0^+w^2\oplus 1^+w^3\oplus2^-w^4$ be the real 
tropical polynomial from Example~\ref{ex:raices-trop}, the usual tropical roots 
of $f'$ are $0$ and $-1$ with multiplicity 2 each root. Now, for the root zero, 
we know that $f$ does not admit a real tropical root. So for $|p|=0$ we have 
that 
$m_+=m_-=0$, $m=2$ and $m-m_+-m_-=2$. On the other hand, the sequence of signs 
$S_{1^+}=(1,1,-1)$ so $m^+=1$, $S_{1^-}=(1,-1,-1)$ and $m_-=1$, $m-m_+-m_-=0$ 
is also even.
\end{example}

\section{Real Tropical Basis}\label{sec:basis}

We start with the definition of real tropical basis, cf. 
\cite{Computing_trop_var}

\begin{definition}
Let $I\subseteq \K[x_1,\ldots,x_n]$. A \textbf{real tropical basis} of $I$ is a 
finite set $\{F_1,\ldots, F_r\}$ that generates $I$ such that 
$trop(\mathcal{V}(I)) = \cap_{i=1}^r\mathcal{T_\R}(F_i)$.
\end{definition}

\begin{example}\label{ex:univariate_Polya}
Let $F=x^2-x+1$, $\mathcal{V}(F)=\emptyset$, there are no real solutions. But 
$trop(F)=f=0^+w^2\oplus 0^-w\oplus 0^+$ and $\mathcal{T_\R}(f)=\{0^+\}$. This 
means 
that the generator $\{x^2-x+1\}$ is not a real tropical basis of the ideal 
$(x^2-x+1)$. Let $x^3+1=(x^2-x+1)(x+1)$. $\mathcal{T_\R}(0^+w^3\oplus 
0^+)=\{0^-\}$, hence $\mathcal{T_\R}(f)\cap \mathcal{T_\R}(0^+w^3\oplus 
0^+)=\emptyset$ and $I=(x^2-x+1, x^3+1)$ is a real tropical basis of $I$. Note 
that, in this case, $I$ is not real radical.
\end{example}

The first result is that real-radical zero-dimensional ideals admit a tropical 
basis. 

\begin{definition}
Let $I\subseteq \K[\underline{x}]=\K[x_1,\ldots, x_n]$. The \textbf{real 
radical} 
of $I$, $\sqrt[\R]{I}$ is the ideal
\[\sqrt[\R]{I}=\{F\in \K[\underline{x}]\ |\ \exists G_1,\ldots,G_r\in 
\K[\underline{x}], \exists m>0, F^{2m}+\sum_{i=1}^r G_i^2\in I\}.\]
By the real theorem of zeros \cite{real-alg-geo}, \[\sqrt[\R]{I}=\{F\in 
\K[x_1,\ldots,x_n]\ |\ 
\forall a\in \mathcal{V}_\K(I), F(a)=0\}\]
An ideal $I$ is \textbf{real radical} if $I=\sqrt[\R]{I}$.
\end{definition}

\begin{theorem}
Let $I$ be a real radical zero dimensional ideal, with $\mathcal{V}(I)\subseteq 
(\K^*)^n$ then $I$ admits a tropical basis.
\end{theorem}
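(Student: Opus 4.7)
The plan is to exploit the explicit finite description of $\mathcal{V}(I)$. Since $I$ is zero-dimensional and real radical, $\mathcal{V}(I) = \{a_1, \ldots, a_s\} \subseteq (\K^*)^n$ is finite and $I = \bigcap_{j=1}^s \mathfrak{m}_{a_j}$ by the real Nullstellensatz (with $\mathfrak{m}_{a_j} = \{F : F(a_j) = 0\}$). For each coordinate $i$, the univariate polynomial
\[
P_i(x_i) := \prod_{j=1}^s \bigl(x_i - (a_j)_i\bigr) \in \K[x_i]
\]
lies in $I$ since it vanishes at every $a_j$, and all its roots are real and in $\K^*$. The first key claim is that $\mathcal{T}_\R(P_i) = \{trop((a_j)_i) : 1 \leq j \leq s\} \subseteq \TR$. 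The inclusion $\supseteq$ is automatic. For $\subseteq$, let $q = w^\epsilon \in \mathcal{T}_\R(P_i)$ and suppose no $(a_j)_i$ has $trop((a_j)_i) = q$; then every $(a_j)_i$ with valuation $w$ has sign $-\epsilon$. Writing the coefficients of $P_i$ via elementary symmetric functions, the monomials of $trop(P_i)$ attaining the minimum at $q$ form a contiguous Newton-polygon range, and a direct sign computation (all tied-valuation roots share the sign $-\epsilon$, so no cancellation occurs in the leading term of each $e_k$) shows that these coefficients all carry the same sign once multiplied by $s(q)^\ell = \epsilon^\ell$, contradicting $q \in \mathcal{T}_\R(P_i)$. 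Equivalently, Lemma~\ref{lem:descartesmanda} gives $m^+_w + m^-_w \leq m_w$, and summing over $w$ together with $\sum_w m_w = s$ forces equality in each Descartes bound and rules out phantoms.

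Thus $S := \bigcap_{i=1}^n \mathcal{T}_\R(P_i) \subseteq \TR^n$ is a finite set containing $trop(\mathcal{V}(I))$. For each spurious $p = (q_1, \ldots, q_n) \in S \setminus trop(\mathcal{V}(I))$ and each $j$, since $p \neq trop(a_j)$, choose an index $i(p,j)$ with $q_{i(p,j)} \neq trop((a_j)_{i(p,j)})$, and set $L_{p,j}(x) := x_{i(p,j)} - (a_j)_{i(p,j)} \in \mathfrak{m}_{a_j}$. The tropical polynomial $trop(L_{p,j})$ has only two monomials; at $p$, either their valuations differ (and the minimum is uniquely attained) or they agree, in which case $q_{i(p,j)} \neq trop((a_j)_{i(p,j)})$ forces opposite signs of $q_{i(p,j)}$ and $(a_j)_{i(p,j)}$, and the minus sign in the constant term of $L_{p,j}$ makes the two min-attaining monomials carry the same sign. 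In either case $p \notin \mathcal{T}_\R(L_{p,j})$. Setting $F_p := \prod_{j=1}^s L_{p,j}$, the $j$-th factor vanishes at $a_j$, so $F_p \in \bigcap_k \mathfrak{m}_{a_k} = I$; iterating Lemma~\ref{lem:factorization} yields $\mathcal{T}_\R(F_p) \subseteq \bigcup_j \mathcal{T}_\R(L_{p,j})$, hence $p \notin \mathcal{T}_\R(F_p)$.

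Combining with any finite generating set $\{G_1, \ldots, G_m\}$ of $I$ (available by noetherianity), the proposed basis is
\[
\mathcal{B} := \{G_1, \ldots, G_m, P_1, \ldots, P_n\} \cup \{F_p : p \in S \setminus trop(\mathcal{V}(I))\},
\]
a finite subset of $I$ that generates $I$. Any $q \in \bigcap_{F \in \mathcal{B}} \mathcal{T}_\R(F)$ lies in $S$ (forced by the $P_i$'s) and cannot be spurious (otherwise $q \in \mathcal{T}_\R(F_q)$ would contradict the construction), so $q \in trop(\mathcal{V}(I))$; the reverse inclusion is standard since each element of $\mathcal{B}$ is in $I$.

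The main obstacle is the phantom-root analysis in the first paragraph: a real-rooted univariate polynomial in $\K[x]$ can in principle possess real tropical roots beyond the tropicalizations of its algebraic roots, and ruling them out is exactly where Lemma~\ref{lem:descartesmanda} (a Descartes-type bound) enters, in a tight counting argument that turns the inequality into an equality. Once this is in hand, the rest of the proof is a combinatorial assembly via Lemma~\ref{lem:factorization}.
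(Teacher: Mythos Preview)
Your proof is correct and follows the same overall strategy as the paper: use coordinate-wise elimination polynomials to cut the candidate set down to a finite $S \supseteq trop(\mathcal{V}(I))$, then manufacture, for each spurious point, a product of simple factors lying in $I$ whose real tropicalization misses that point, invoking Lemma~\ref{lem:factorization}. The phantom-root step for the univariate $P_i$ is exactly what the paper does with its $F_j$, justified the same way (Lemma~\ref{lem:descartesmanda} together with the fact that all roots are real). One small remark: your ``summing over $w$'' argument needs, in addition to Lemma~\ref{lem:descartesmanda}, the classical Descartes bound applied to each residue polynomial (which gives $\#\{\text{roots with tropicalization }w^\epsilon\}\le m^\epsilon_w$) to close the sandwich; your direct residue-polynomial sign computation, however, is self-contained and correct.

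Where you genuinely streamline the paper is in the certificate polynomials. The paper introduces an auxiliary polynomial $F_0$ built from a monomial map $x\mapsto x_1^{b_1}\cdots x_n^{b_n}$ (chosen injective on $|S|$) to discard spurious points with wrong modulus, and then builds each $G_c$ as a hybrid of linear factors $(x_{h(i)}-p_{i,h(i)})$ and monomial-difference factors. You bypass $F_0$ entirely and use a uniform product $F_p=\prod_j\bigl(x_{i(p,j)}-(a_j)_{i(p,j)}\bigr)$ of single-variable linear forms, observing that for a two-term polynomial the failure of $p$ to be a real tropical root is immediate from either a valuation mismatch or a sign mismatch in the chosen coordinate. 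This is cleaner and avoids the injectivity bookkeeping; the trade-off is only that your family $\{F_p\}$ may be larger, since the paper's $F_0$ already kills all wrong-modulus points at once.
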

\begin{proof}
Let $V=\mathcal{V}(I)=\{p_1,\ldots, p_r\}\subseteq (\K^*)^n$. Let 
$p_i=(p_{i1},\ldots, p_{in})$ and $trop(p_i)=a_i=(a_{i1},\ldots,a_{in})$, 
$1\leq i\leq n$. So $trop(V)=\{a_1,\ldots,a_r\}$. Let $F_j$ be the squarefree 
part of $\prod_{i=1}^r(x_j-p_{ij})$, since $I$ is real radical $F_j\in I, 1\leq 
j\leq n$. The tropical roots of $trop(F_j)$ are precisely $a_{1j},\ldots, 
a_{rj}$  
and the number of tropical roots of each sign and valuation $a_{ij}$ can be 
recovered using Lemma~\ref{lem:descartesmanda} and taking into account that all 
the roots of $F_j$ are real. The set of tropical polynomials $\left\{trop(F_j), 
1\leq j\leq n\right\}$ describe a finite set $S$ of points containing $trop(V)$.

Let $L=b_1x_1+\ldots +b_nx_n$ be a linear function with integer coefficients 
such that $L$ is injective in the set of modulus of $S$, $|S|=\{|a|\ |\ a\in 
S\}$. Let $F_0$ be the squarefree part of the numerator of the polynomial 
$\prod_{i=1}^r (x_1^{b_1}\cdots x_n^{b_n}-p_{i1}^{b_1}\cdots p_{in}^{b_n})$. 
Since this is a polynomial in $I$ whose tropicalization is injective in $S$, it 
allows us to discriminate which modulus of points of $S$ belong to $trop(V)$ or 
not.

Finally, suppose that $c=(c_1,\ldots,c_n)$ is a tropical point in $S\cap 
\mathcal{T}_\R(F_0)$ but not in $trop(\mathcal{V}(I))$. This can only happen 
if there 
is a point in $a\in trop(V)$ with $b\neq a$, $|b|=|a|$ and for every index 
$j$ there is a point in $trop(V)$ with $j$-th coordinate $b_j$.

For every point $a_i\in trop(V)$ with $|a_i|=|c|$, there is a coordinate $h(i)$ 
with $s(a_{i,h(i)})\neq s(c_{h(i)})$, Let $G_c$ be the squarefree part of 

\begin{equation}\label{eq:gc}
\prod_{i, |a_i|=|c|} (x_{h(i)}-p_{i,h(i)}) \times 
\prod_{i,|a_i|\neq|c|}(x_1^{b_1}\cdots x_n^{b_n}-p_{i1}^{b_1}\cdots 
p_{in}^{b_n}) \in I
\end{equation}

This polynomial vanishes by construction on every element of $trop(V)$, but $c$ 
is not a real tropical root of $trop(G_c)$, because, for each factor of $G_c$ 
in 
(\ref{eq:gc}), $c$ either has the wrong modulus or the wrong distribution of 
signs. By Lemma~\ref{lem:factorization}, $c\notin \mathcal{T}_\R(G_c)$.

If $H$ is any generator set of $I$, then $H\cup \{F_j, 1\leq j\leq 
n\}\cup\{F_0\}\cup \{G_c| c\in S-trop(V)\}$ is a real tropical basis of $I$.
\end{proof}

\begin{example}
Let $V=\{(-1,2)$, $(2,3)$, $(-3,-t)$, $(1,-4)$, $(2t,4t)\}\subseteq 
(\mathbb{K}^*)^2$, $trop(V) = \{(0^-,0^+)$, $(0^+,0^+)$, $(0^-,1^-)$, 
$(0^+,0^-)$, $(1^+,1^+)\}$. Let $I=\mathcal{I}(V)$ be the ideal of $V$. We are 
computing a real tropical basis of $I$. First, define
\[F_1(x)=x^{5} + \left(-2 t + 1\right) x^{4} + \left(-2 t - 7\right) x^{3} + 
\left(14 t - 1\right) x^{2} + \left(2 t + 6\right) x - 12 t,\]
\[F_2(y)=y^{5} + \left(-3 t - 1\right) y^{4} + \left(-4 t^{2} + 3 t - 14\right) 
y^{3} + \left(4 t^{2} + 42 t + 24\right) y^{2} + \]\[\left(56 t^{2} - 72 
t\right) y - 96 t^{2}.\]
$trop(F_1)$ and $trop(F_2)$ define the finite set
\[\begin{matrix}
   S=&\{(0^+,0^+),&(0^+,1^+), &(0^+,1^-), &(0^+,0^-),\\
     &\ (0^-,0^+), &(0^-,1^+), &(0^-,1^-), &(0^-,0^-),\\
     &\ (1^+,0^+), &(1^+,1^+), &(1^+,1^-),&(1^+,0^-)\}
\end{matrix}\]
$|S|=\{(0,0), (0,1), (1,0), (1,1)\}$. Note 
that $(1,0)\notin |trop(V)|$. Let $L=$ $x+2y$ by a linear function injective in 
$|S|$. Then
\[F_0=(xy^{2} - 18) \cdot (x y^{2} - 16) \cdot (x y^{2} + 4) \cdot 
(x y^{2} + 3 t^{2}) \cdot (x y^{2} - 32 t^{3})\]
In usual tropical geometry $\{F_1,F_2,F_0\}$ would suffice to provide a 
tropical basis, See \cite{TR_bas_reg_proy}, \cite{Resultantes-trop}. But these 
polynomials are not enough in the real case. The elements of $S$ that are also 
in $\mathcal{T}_\R(F_0)$ are 
$\{(0^+,0^+)$, $(0^+,0^-)$, $(0^-,0^+)$, $(0^-,1^+)$, $(0^-,1^-)$, $(0^-,0^-)$, 
$(1^+,1^-)$, $(1^+, 1^+)\}$. We have to compute polynomials to discard the 
points in $S$ not in $trop(V)$. Following the theorem:\\
$G_{(0^-,0^-)}=(x-2)(x-1)(y-2)(xy^2-32t^3)(xy^2+3t^2)$, and $(0^-,0^-) \notin 
\mathcal{T}_\R(G_{(0^+,0^+)})$. Define also 
$G_{(0^-,1^+)}=(y+t)(xy^2 + 4)(xy^2 - 18)(xy^2 - 16)(xy^2 - 32t^3)$ and
$G_{(1^+,1^-)}=(y-4t)(xy^2 + 4)(xy^2 - 18)(xy^2 - 16)(xy^2 + 3t^2)$. Then, if 
we add to any finite set of generators of $I$ the set $\{F_1$, $F_2$, $F_0$, 
$G_{(0^-,0^-)}$, $G_{(0^-,1^+)}$, $G_{(1^+,1^-)}\}$ we get a real tropical 
basis of $I$.
\end{example}

The hypothesis of being real radical cannot be avoided as the following example 
shows.

\begin{example}\label{ex:univariate_not_Polya}
Let $F=x^2-(2+t)x+1$. The discriminant of $F$ is $t^2 + 4t>0$, so there are no 
real roots of $F$ and $\sqrt[\R]{(F)}=(1)$. However, $I=(F)$ has no real 
tropical 
basis. 
To see that, $\{0^+\}$ is a root of $trop(x^2-(2+t)x+1)=0^+x^2\oplus 0^-x\oplus 
0^+$. Now $v(F(1))=v(-t)=1>0$, $1$ is a real root of $x^2-2x+1$. If $G\in 
\K[x]$, assume without loss of 
generality that the minimum of the valuation of the coefficients of $G$ is 
zero. 
Then $1$ will be a root of the residue polynomial of $F\cdot G$. This means 
that 
$0^+$ will also be a tropical root of $trop(FG)$ and $\bigcap_{H\in 
I}\mathcal{T}_\R(H)=\{0^+\}$.
\end{example}

The reader may think that the failure here is due to the fact that the 
\emph{residue} polynomial $x^2-2x+1$ obtained by substituting $t=0$ has a 
double root. This is true for the field of real Puiseux series but it may fail 
over more complicated fields.

\begin{definition}
Let $F=\sum_{\ell\in A}p_\ell x^\ell\in \K[x_1,\ldots,x_n]$ be a polynomial, 
let $w\in \R^n$. Let $i_1,\ldots,i_r$ be the monomials where $trop(F)$ attains 
its 
minimum at $w$. We define the \textbf{residue polynomial} of $F$ with respect 
to 
$w$ to the polynomial
\[F_w=\sum_{j=1}^r Pc(p_{i_j})x^{i_j}\in \R[x_1,\ldots,x_n],\]
where $Pc(p_{i_j})\in \R$ is the principal coefficient of the power series 
$p_{i_j}$.
\end{definition}

\begin{proposition}\label{prop:fallaennoarquimediano}
Let $F\in \K[x]$ be an univariate polynomial such that, for every real root 
$p\in \K^*$ we have that $F_{trop(p)}$ has only simple roots, then $(F)$ admits 
a tropical basis.
\end{proposition}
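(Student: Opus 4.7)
The plan is to build a real tropical basis $B$ starting from $\{F\}$ and, for each point in the finite set $\mathcal T_\R(F)\setminus trop(\mathcal V(F))$, adjoining one explicit multiple of $F$ that removes that spurious point. Since $|\mathcal T_\R(F)|\le 2\deg F$, this produces a finite set, and the whole argument is a one-variable instance of P\'olya's Theorem~\ref{teo:Polya}.

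The first step I would carry out is the multiplicativity of residue polynomials: for nonzero $F,G\in\K[x]$ and any $w\in\R$, $(FG)_w=F_w\cdot G_w$ in $\R[x]$. This is routine valuation bookkeeping: comparing valuations coefficient by coefficient, the leading term of the $x^k$-coefficient of $FG$ is precisely the $x^k$-coefficient of $F_wG_w$, and this product is nonzero because $\R[x]$ is a domain, so no unexpected cancellations can shrink the support.

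Next, fix a spurious point $(s,w)\in\mathcal T_\R(F)\setminus trop(\mathcal V(F))$. The hypothesis together with Hensel's lemma (which applies in the Henselian field $\K$) says that every simple real root of $F_w$ lifts uniquely to a real root of $F$ in $\K$ of matching valuation and sign; since no root of $F$ sits over $(s,w)$, we conclude that $F_w$ has no real root of sign $s$ at all. Combined with the simple-roots assumption, $F_w(sy)\in\R[y]$ has no positive zero, and, after pulling out any $y^k$ factor coming from an $x^k$-factor of $F_w$, it is strictly of one sign on $[0,\infty)$. The univariate case of P\'olya's theorem then produces an integer $N$ such that $F_w(sy)\cdot(1+y)^N$ has all coefficients of a single sign. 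Lifting this to $\K[x]$, I set
\[G_{(s,w)}(x)=\bigl(1+s\,t^{-w}x\bigr)^{N}\in\K[x],\]
whose residue at $w$ is exactly $(1+sx)^N$; by multiplicativity,
\[(F\cdot G_{(s,w)})_w(sy)=F_w(sy)\,(1+y)^N\]
has all nonzero coefficients of one sign, so $(s,w)\notin\mathcal T_\R(F\cdot G_{(s,w)})$. The finite set $\{F\}\cup\{F\cdot G_{(s,w)}:(s,w)\in\mathcal T_\R(F)\setminus trop(\mathcal V(F))\}$ is then the desired real tropical basis.

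The main obstacle is the verification that $F_w(sy)$ is strictly of one sign on $(0,\infty)$, and this is exactly where the simple-roots hypothesis is indispensable: a multiple real root of $F_w$ with sign $s$ would yield a positive zero of $F_w(sy)$ without a coefficient sign change there, and Descartes' rule of signs would then force at least two sign changes to persist in $F_w(sy)(1+y)^N$ for every $N$, so no choice of $N$ could remove $(s,w)$. This is exactly the pathology made explicit in Example~\ref{ex:univariate_not_Polya}, and the hypothesis is tailored precisely to forbid it.
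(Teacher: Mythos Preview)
Your approach is essentially identical to the paper's: for each spurious real tropical root, use the simple-roots hypothesis together with Hensel lifting to conclude that the residue polynomial has no real zero of the relevant sign, then apply P\'olya's theorem to produce a binomial power whose product with $F$ eliminates that spurious root. The only differences are cosmetic: the paper normalizes to $p_i=0^+$ and multiplies by $(1+x)^N$, whereas you keep track of a general $(s,w)$ and use $(1+s\,t^{-w}x)^N$, and you make explicit the multiplicativity $(FG)_w=F_wG_w$ that the paper uses without comment.
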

\begin{proof}
Let $\{p_1,\ldots,p_r\}=\mathcal{T}_\R(F)$ be the real tropical roots of 
$trop(F)$. If $p_i\notin trop(\{F=0\})$ we have to find a certificate of this 
fact. Without loss of generality, we may assume that $p_i=0^+$. $F_0\in \R[x]$ 
is an univariate polynomial that has no positive real root. From 
Theorem~\ref{teo:Polya}, there exists a natural $N$ such that $F_0\cdot(1+x)^N$ 
has all its coefficients positive. It follows that $0^+ \notin 
\mathcal{T}_\R(F\cdot (1+x)^N)$. Iterating over every element in 
$\mathcal{T}_\R(F)-trop(\{F=0\})$ we can construct a real tropical basis of 
$(F)$.
\end{proof}

We now show that Proposition~\ref{prop:fallaennoarquimediano} cannot be 
extended to any real closed valued field.

\begin{example}\label{ex:puiseuxdepuiseux}
Let $\mathbb{F}=\K\{\{s\}\}$ be the field of Puiseux series is $s$ whose 
coefficients are Puiseux series in $t$ over the reals. Take again 
$F=x^2-(2+t)x+1\mathbb{F}[x]$. Now, $trop(F)$ still has a tropical root $0^+$ 
but 
$F$ has no root in $\mathbb{F}$. By the same argument as in 
Example~\ref{ex:univariate_not_Polya} $(F)$ does not admit a tropical basis. 
Note that in this new context $F=F_0$ has no multiple root.
\end{example}

Let us consider now other ideals that also admit tropical basis. We can revisit 
combinatorial patchworking (See \cite{patchworking} for the details) from the 
point of view of tropical basis.

\begin{theorem}\label{teo:T-construction}
Let $F\in \mathbb{K}[x_1,\ldots, x_n]$ be a real polynomial and let $f = 
trop(F)$ be the corresponding tropical polynomial. $f$ defines a mixed 
subdivision in $Supp(F)$ by duality. Assume that this subdivision is a 
triangulation that contains all the monomials in $Supp(F)$ as vertices. Then 
$\{F\}$ is a real tropical basis of $(F)$. That is 
$\mathcal{T}_\R(F)=trop(\mathcal{V}(F))$.
\end{theorem}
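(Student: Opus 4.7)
The containment $trop(\mathcal{V}(F)) \subseteq \mathcal{T}_\R(F)$ is already known, so the plan is to establish the reverse inclusion by lifting each $p \in \mathcal{T}_\R(F)$ to a root of $F$ in $(\K^*)^n$. Write $p = (\epsilon, w)$ with $\epsilon \in \{\pm 1\}^n$; I first treat the case $w \in \mathbb{Q}^n$ and recover the general case by density, using that the active set $B$ of monomials attaining the minimum and the signs of the initial-form coefficients are constant on each open cell of the rational-vertex tropical subdivision. Substitute $x_k = \epsilon_k t^{w_k} y_k$ and seek $y \in \K^n$ with $v(y_k) = 0$ and $Pc(y_k) > 0$; any such $y$ automatically yields $trop(x) = p$. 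Setting $m = \min_{\ell \in A}\{v(a_\ell) + \langle w, \ell\rangle\}$ and clearing denominators in $t$ (replace $t$ by $s^N$ for a suitable integer $N$), the equation $F(x) = 0$ becomes $\tilde F(y, s) = 0$, where $\tilde F \in \R[[s]][y_1, \ldots, y_n]$ satisfies
\[\tilde F(y, 0) = G(y) := \sum_{\ell \in B} Pc(a_\ell)\,\epsilon^\ell\, y^\ell \in \R[y_1, \ldots, y_n].\]
The triangulation hypothesis forces $B$ to be the vertex set of a simplex, so $B = \{i_0,\, i_0+v_1,\,\ldots,\, i_0+v_d\}$ with $v_1, \ldots, v_d \in \Z^n$ linearly independent.

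The next step is to find a smooth positive root $y^0 \in \R_{>0}^n$ of $G$. Extend $v_1, \ldots, v_d$ to a $\mathbb{Q}$-basis $v_1, \ldots, v_n$ of $\mathbb{Q}^n$; the map $\phi\colon (\R_{>0})^n \to (\R_{>0})^n$, $y \mapsto u = (y^{v_1}, \ldots, y^{v_n})$, is a real-analytic diffeomorphism (linear in log-coordinates with invertible matrix). In $u$-coordinates, $G(y) = y^{i_0}\, H(u)$ with
\[H(u) = c_0 + c_1 u_1 + \cdots + c_d u_d,\quad c_0 = Pc(a_{i_0})\epsilon^{i_0},\ c_j = Pc(a_{i_0+v_j})\epsilon^{i_0+v_j}.\]
Because $p \in \mathcal{T}_\R(F)$, two of $c_0, c_1, \ldots, c_d$ have opposite signs, so the affine equation $H = 0$ admits solutions in $(\R_{>0})^n$ (with $u_{d+1}, \ldots, u_n$ chosen freely positive). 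The gradient $\nabla_u H = (c_1, \ldots, c_d, 0, \ldots, 0)$ is nonzero, so every such solution is a smooth zero of $H$; pulling back by $\phi$ gives the desired smooth positive root $y^0$ of $G$.

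Finally, because $\nabla G(y^0) \neq 0$, reindex so that $\partial G/\partial y_1(y^0) \neq 0$ and fix $y_k = y_k^0$ for $k \geq 2$. Then $P(y_1, s) := \tilde F(y_1, y_2^0, \ldots, y_n^0, s) \in \R[[s]][y_1]$ satisfies $P(y_1^0, 0) = 0$ and $P'_{y_1}(y_1^0, 0) \neq 0$, so Hensel's lemma in the complete local ring $\R[[s]]$ yields a unique $\hat y_1 \in \R[[s]] \subseteq \K$ with $\hat y_1(0) = y_1^0 > 0$ and $P(\hat y_1, s) = 0$. Setting $x_1 = \epsilon_1 t^{w_1} \hat y_1$ and $x_k = \epsilon_k t^{w_k} y_k^0$ for $k \geq 2$ produces $x \in \mathcal{V}(F) \cap (\K^*)^n$ with $trop(x) = p$. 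The central obstacle is exhibiting a smooth real positive root of $G$; the simplex hypothesis on $B$ is precisely what converts $G = 0$ into a nontrivial affine equation via the monomial diffeomorphism $\phi$, after which the Hensel lifting is routine.
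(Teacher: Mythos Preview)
Your proof is correct, but it takes a different route from the paper's. The paper argues as follows: after reducing to $p=(0^+,\ldots,0^+)$, let $C$ be the simplex of active monomials and pick $a\in C$ with positive sign and $b\in C$ with negative sign. Because the vertices of a simplex are affinely independent, there is an affine functional $L=r_0+r_1x_1+\cdots+r_nx_n$ with integer coefficients such that $L(a)<L(c)<L(b)$ for every other vertex $c$ of $C$. Along the monomial curve $s\mapsto(s^{r_1},\ldots,s^{r_n})$ with $v(s)=0$, the principal part of $F$ is the real polynomial $\sum_{\ell\in C}Pc(a_\ell)\,\sigma^{L(\ell)}$ in the leading coefficient $\sigma$ of $s$; this is positive for $\sigma\ll 1$ (dominated by the $a$-term) and negative for $\sigma\gg 1$ (dominated by the $b$-term). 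The intermediate value theorem in the real closed field $\K$ then produces a root $s_0$ of valuation~$0$, and $(s_0^{r_1},\ldots,s_0^{r_n})$ is the desired lift.

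So both arguments exploit the simplex hypothesis to linearise the initial form, but the paper finishes with a one\nobreakdash-parameter sign change and the IVT, whereas you locate a smooth positive root of the entire initial form $G$ via the monomial diffeomorphism $\phi$ and then Hensel-lift in $\R[[s]]$. The paper's route is shorter and needs only real-closedness; your route is more algebraic and makes the smoothness of the lift explicit, which could be useful if one wanted to control the lifted point further or work in settings where the IVT is unavailable. Your density remark for irrational $w$ (constancy of the active set and signs on open cells) is a point the paper leaves implicit.
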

\begin{proof}
We are going to prove that if $p\in \mathcal{T}_\R(F)\cap \mathbb{Q}^n$ then 
$p\in trop(\mathcal{V}(F))$. Without loss of generality, we may assume that 
$p=(0^+,\ldots, 0^+)$.

Let $S=Supp(F)$, the set of monomials of $S$ where $f$ attains its minimum at 
$p$ is a simplex $C$ in $S$ that is not a point. Since $p\in 
\mathcal{T}_\R(f)$, then there is a positive 
monomial $a\in C$ and a negative monomial $b\in C$. Let 
$L=r_0+r_1x_1+\ldots+r_nx_n$ 
be an affine function with integer coefficients such that for all monomials 
$c\in S-\{a,b\}$, $L(a)<L(c)<L(b)$. Let $c_ax^a$, $-c_bx^b$ be the 
corresponding 
monomials of $F$. Consider the points of the form $(s^{r_1},\ldots, s^{r_n})$. 
If $0<s_-<<1$ is a valuation zero positive element of $\K$ small enough, then 
$F(s_-^{r_1},\ldots, s_-^{r_n})\sim c_as^{L(a)}>0$. If $1<<s_+$ is a valuation 
zero element big enough, $F(s_+^{r_1},\ldots, s_+^{r_n})\sim -c_bs^{L(b)}<0$. 
Hence, by the intermediate value theorem, the polynomial $F(s^{r_1},\ldots, 
s^{r_n})$ has a root $s_0$ in the 
interval $[s_-,s_+]$, and hence $s_0$ is of valuation zero. By construction, 
$(s_0^{r_1},\ldots, s_0^{r_n})$ is a zero of $F$ with tropicalization 
$(0^+,\ldots, 0^+)$.
\end{proof}

\begin{corollary}\label{cor:Kapranov_hyperplane}
Real vector hyperplanes have tropical basis. More precisely, let 
$H=V(a_1x_{i_1}+\ldots+a_rx_{i_r}+a_0)\in \K[x_1,\ldots, x_n]$ be a hyperplane 
in $(\mathbb{K}^*)^n$.
Then $trop(H)=\mathcal{T}(trop(a_1)w_{i_1}\oplus\ldots\oplus 
trop(a_n)w_{i_r}\oplus trop(a_0))$.
\end{corollary}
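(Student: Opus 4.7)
The plan is to deduce the corollary as a special case of Theorem~\ref{teo:T-construction}. First I would write $F=a_1x_{i_1}+\ldots+a_rx_{i_r}+a_0$ and observe that its support is a subset of $\{0, e_{i_1},\ldots, e_{i_r}\}\subset \mathbb{N}^n$, where the $e_{i_j}$ are distinct standard basis vectors. After discarding any $a_j$ that happens to be zero (and the origin if $a_0=0$), the remaining monomials form an affinely independent set in $\mathbb{N}^n$, since $e_{i_1},\ldots,e_{i_r}$ are linearly independent and translating by $-0$ does not alter affine independence.

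The key observation is that an affinely independent point configuration admits a unique polyhedral subdivision, namely the single simplex it spans, and the vertices of that simplex are precisely the points themselves. Consequently, no matter what lifting the valuations $v(a_j)$ induce, the mixed subdivision of $Supp(F)$ attached to $trop(F)$ is this trivial triangulation, and it trivially contains every monomial of $F$ as a vertex. This is exactly the hypothesis required to invoke Theorem~\ref{teo:T-construction}.

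Applying the theorem yields $\mathcal{T}_\R(F)=trop(\mathcal{V}(F))$. Since $\mathcal{V}(F)=H$ and, by definition of the $trop$ map on polynomials, $trop(F)=trop(a_1)w_{i_1}\oplus\ldots\oplus trop(a_r)w_{i_r}\oplus trop(a_0)$, so that $\mathcal{T}_\R(F)$ equals the tropical hypersurface displayed on the right-hand side of the statement, the identity in the corollary follows; simultaneously we conclude that $\{F\}$ is itself a real tropical basis of the principal ideal $(F)$. There is essentially no obstacle: the hyperplane case is precisely the degenerate situation in which the Newton polytope is already a simplex, so the triangulation hypothesis of Theorem~\ref{teo:T-construction} holds for free, and the only care required is to drop absent monomials before invoking affine independence.
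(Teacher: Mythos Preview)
Your proposal is correct and follows exactly the paper's own argument: the paper's proof is the single line ``We are in the hypothesis of Theorem~\ref{teo:T-construction} since $Supp(H)$ is a simplex,'' and you have simply spelled out why the support of a linear form is a simplex (affine independence of the relevant basis vectors and the origin) and why that forces the induced subdivision to be the trivial triangulation. There is no substantive difference in approach.
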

\begin{proof}
We are in the hypothesis of Theorem~\ref{teo:T-construction} since $Supp(H)$ is 
a 
simplex.
\end{proof}

The last goal of this section is to prove that linear ideals have a tropical 
basis (cf. \cite{Bergman-complex, Computing_trop_var} for the algebraically
closed case). We start with a Lemma that states that there are 
infinite tropical basis.

\begin{lemma}\label{lem:trop-basis-linear}
Let $V \subseteq (\mathbb{K}^*)^n$ be a linear space, then \[trop(V) = 
\bigcap_{\substack{\subseteq H\\H\in \mathcal{H}}} trop(H),\] where 
$\mathcal{H}$ is the set of hyperplanes of $(\mathbb{K}^*)^n$.
\end{lemma}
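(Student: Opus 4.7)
The plan is as follows. The inclusion $trop(V)\subseteq \bigcap_{V\subseteq H,\,H\in\mathcal{H}} trop(H)$ is immediate, since every $q\in V$ lies in each hyperplane $H\supseteq V$, so $trop(q)\in trop(H)$; intersecting over $H$ and passing to closures gives this direction.

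For the reverse inclusion I will argue contrapositively: if $p\notin trop(V)$, I produce a single hyperplane $H\supseteq V$ with $p\notin trop(H)$. Multiplying every coordinate of $V$ by a torus element of sign $s(p_i)$ and valuation $-|p_i|$ translates $p$ to $(0^+,\ldots,0^+)$ while preserving the affine-linear structure of $V$, so I may assume $p=(0^+,\ldots,0^+)$. Under this normalization, $p\notin trop(V)$ means that $V$ contains no $q$ whose coordinates are all positive of valuation zero.

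The core of the argument is to study the $\K$-vector space $I_1$ of affine linear forms vanishing on $V$: each nonzero $L\in I_1$ cuts out a hyperplane through $V$, and by Corollary~\ref{cor:Kapranov_hyperplane} it suffices to find $L\in I_1$ such that the residue polynomial of $L$ at $(0,\ldots,0)$ has all its coefficients of a single sign. Rescaling a basis of $I_1$ so that its coefficient vectors lie in the valuation ring of $\K$ with at least one entry a unit, and reducing modulo the maximal ideal, yields an affine subspace $V_0\subseteq \R^n$; a direct lifting argument for linear systems over $\K$ shows that any point of $V_0\cap \R^n_{>0}$ would lift to a $q\in V$ with $trop(q)=(0^+,\ldots,0^+)$, so the assumption forces $V_0\cap \R^n_{>0}=\emptyset$. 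I then apply a Hahn--Banach separation of the affine subspace $V_0$ from the open convex cone $\R^n_{>0}$; because an affine functional bounded above on an affine subspace is constant on it, this produces an affine form $\ell\in \R[x_1,\ldots,x_n]_{\leq 1}$ vanishing on $V_0$ whose coefficients (constant term included) are all nonnegative and not all zero. Lifting $\ell$ to an $L\in I_1$ whose residue coincides with $\ell$ then gives the desired hyperplane $\mathcal{V}(L)\supseteq V$: every monomial of $trop(L)$ attaining the minimum valuation has positive principal coefficient, so $(0^+,\ldots,0^+)\notin trop(\mathcal{V}(L))$.

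The step I expect to be the main obstacle is the reduction modulo the maximal ideal: one must choose the normalization of the generators of $I_1$ carefully so that $V_0$ is cut out by the correct equations, and so that points of $V_0\cap \R^n_{>0}$ genuinely lift to points of $V$ with the prescribed sign-valuation pattern. For affine-linear varieties this is within reach through elementary linear algebra over the valuation ring of $\K$, but the bookkeeping needed to align the reduction of $V$, the separating affine form $\ell$, and the final lift of $\ell$ to $I_1$ is the delicate part of the proof.
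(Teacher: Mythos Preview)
Your plan is correct and leads to a valid proof, but it follows a genuinely different route from the paper's argument. The paper proves the nontrivial inclusion directly: assuming $p$ lies in every $trop(H)$, it constructs a lift $q\in V$ with $trop(q)=p$. This is done by a double induction on the ambient dimension $n$ and on the codimension of $V$: if some $e_i\in V$ one projects away the $i$-th coordinate; otherwise one applies the inductive hypothesis to each $V+\langle e_i\rangle$ to produce a matrix $A$ of points $a_i\in V$ with prescribed sign/valuation patterns, and then performs an explicit Gaussian reduction on $A$ to exhibit a linear combination lying in $V$ with all coordinates positive of valuation zero. Your argument instead runs contrapositively: pass to the residue field, obtain a real affine subspace $V_0$ disjoint from $\R^n_{>0}$, separate by an affine form $\ell$ with nonnegative coefficients, and lift $\ell$ to an $L\in I_1$. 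The two approaches buy different things. The paper's proof is entirely self-contained and constructive, but long and somewhat ad hoc. Your approach is shorter and more conceptual, replacing the matrix combinatorics by a single convex-separation step; it also makes transparent why the archimedean nature of the residue field $\R$ matters (it enters through the separation theorem). The technical point you flag --- choosing a basis of $I_1$ whose reductions remain independent so that $V_0$ is cut out correctly and has the right dimension --- is indeed the only place requiring care; it can be handled by a pivoting argument over the valuation ring (pick the entry of minimal valuation as pivot at each stage), or by homogenizing and using that the reduction of a $\K$-subspace of $\K^{n+1}$ has the same dimension, which simultaneously guarantees that your separating form $\ell$ lies in $\overline{I_1}$ and hence lifts.
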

\begin{proof}
Clearly $trop(V)\subseteq \bigcap_{V\subseteq H} trop(H)$. We prove the 
equality by induction in the dimension $n$ of the ambient space. For $n=2$, $V$ 
can only be a line, so the result holds by 
Corollary~\ref{cor:Kapranov_hyperplane}. Assume it is true for ambient 
dimension up to $n-1$. We prove that the result is true in ambient dimension 
$n$ by induction in the codimension of $V$. If $codim(V)=1$, then $V$ is a 
hyperplane and this is again Corollary~\ref{cor:Kapranov_hyperplane}. Assume 
now that the result is true for all codimension $r$ spaces and that $V$ has 
codimension $r+1$. If $V\subseteq \{x_i=0\}$ for some $i$ then 
$trop(V)=trop(\{x_i=0\})=\emptyset$. Let $p=(p_1,\ldots,p_n)\in 
\bigcap_{V\subseteq H} trop(H)$. Without loss of generality 
$p=(0^+,\ldots,0^+)$. If $e_i\in V$ is the $i$-th vector of the canonical 
basis, the $i$-th coordinate of the points of $V$ may take any value and for 
all $V\subseteq H$, the $i$-th coefficient of the defining linear equation of 
$H$ is zero. It follows that we can project along the $i$-th coordinate and $p 
\in trop(V)$ iff $\pi_i(p)\in trop(\pi_i(V))$ and clearly $\pi_i(p)\in 
\bigcap_{V \subseteq H} trop(\pi_i(H)) =\bigcap_{\pi_i(V)\subseteq H} trop(H)$ 
where $H$ is any hyperplane in dimension $n-1$ containing $\pi_i(V)$ and we are 
done by induction.

Hence, we may assume that $e_i\notin V$ for $1\leq i\leq n$. Consider the linear
spaces $V+<e_i>$, $1\leq i\leq n$. Then $p\in \bigcap_{V+<e_i>\subseteq H}
trop(H)=trop(V+<e_i>)$ by induction hypothesis in the codimension of $V$.
Hence, for each $i$, $1\leq
i\leq n$ there is a point $a_i=(a_{i1},\ldots, a_{in}) \in V$ such that
for all $i\neq j$, $a_{ij}>0$ and $v(a_{ij})=0$. We can write the matrix
\[A=\begin{pmatrix}a_{11} & a_{12}&\ldots & a_{1n}\\&&\ldots\\a_{n1} &
a_{n2}&\ldots & a_{nn}\end{pmatrix}\ trop(A)=\begin{pmatrix}q_1 & 0^+&\ldots &
0^+\\0^+&q_2&\ldots &0^+\\ 0^+&0^+&\ldots &q_n\end{pmatrix}\]

Since $codim(V)>1$, $rank(A)\leq n-2$ and $A$ is singular, so it cannot happen 
that for all $i$ $|q_i|< 0$. Assume that $|q_1| \geq 0$. Since the minor 
$A_{11}$ is also singular, there must be another element $|q_i| \geq 0$. Assume 
$|q_2|\geq 0$. If for some $i$ is $|q_i| > 0$ let $\lambda$ be an element of 
valuation zero such that its residue coefficient $b$ is $0<b<<1$ then $\lambda 
a_j +a_i$, $j\in {1,2}, j\neq i$, $trop(\lambda a_j +a_i)=p$. Thus, we may 
assume that $|q_i|\leq 0$ for all $i$. But again, since $A$ is singular, 
$|q_i|=0$ for all $i$. If for some $i$ is $a_{ii}>0$ we are done and $a_i$ is a 
lift of $p$. Last, we have the case that $q_i=0^-$ for all $i$. In this case, 
we perform Gauss reduction on the matrix $A$. The coefficients we have to 
multiply $a_1$ in order to make zeros below the first column are $0<\lambda_i$, 
$v(\lambda_i)=0$. For all the elements $a_{ij}$, $i\neq 1$ $a_{ij}+\lambda_i 
a_{1j}$ is a positive element of valuation $0$. On the other hand 
$v(a_{ii}+\lambda a_{1i})\geq 0$.

Now, if for some $i\neq 1$, $v(a_{ii}+\lambda_i a_{1i})>0$, let $k\notin 
\{1,i\}$, let $0<\eta$ with, $v(\eta)=0$, $0<Pc(\eta)<<1$ then $b=\eta a_k + 
(a_i+\lambda_ia_1)$ is a vector $b\in V$ such that $b_i>0$ and $v(b_i)=0$ and 
we 
are done. 

Next case is if for some $i\neq 1$, $a_{ii}+\lambda_ia_{1i}$ is a positive 
element of valuation $0$, then for $0<\mu$ of valuation 0, $1<<Pc(\mu)$, 
$-a_1+\mu(a_i+\lambda_i a_1)$ is the desired point.

Finally, if for all $i$, $a_{ii}+\lambda_ia_{1i}$ is a
negative element of valuation $0$, then we perform Gauss reduction on the
second column and repeat the process. Since $rank(A)\leq n-2$, after a finite
number of steps, we must arrive to a matrix (after reordering the indices) of
the form
\[\begin{pmatrix}a_{11} & a_{12} & \ldots & a_{1k} & a_{1,k+1} & \ldots &
a_{1n}\\
                0      & b_{22} & \ldots & b_{2k} & b_{2,k+1} & \ldots &
b_{2n}\\
\multicolumn{7}{c}{\ldots}\\
0 & 0 & \ldots & b_{k-1,k-1} & b_{k-1,k} & \ldots & b_{k-1,n}\\
0 & 0 & \ldots & 0 & b_{k,k} & \ldots & b_{k,n}\\
\multicolumn{7}{c}{\ldots}\\
0 & 0 & \ldots & 0 & b_{n,k} & \ldots & b_{nn}\\
\end{pmatrix}\]

with real tropicalization

\[\begin{pmatrix}0^- & 0^+ & \ldots & 0^+ & 0^+ & \ldots & 0^+\\
                 \infty   & 0^- & \ldots & 0^+ & 0^+ & \ldots & 0^+\\
\multicolumn{7}{c}{\ldots}\\
\infty & \infty & \ldots & 0^- & 0^+ & \ldots & 0^+\\
\infty & \infty & \ldots & \infty & c_{k,k}^{+} & \ldots & c_{k,n}^+\\
\multicolumn{7}{c}{\ldots}\\
\infty & \infty & \ldots & \infty & c_{n,k}^{+} & \ldots & c_{nn}^+\\
\end{pmatrix}\]

Such that $k=dim(V)\leq n-2$; $b_{kk}\neq 0$; $b_{ij}=0$ if $i>j<k$; 
$trop(b_{ij})=0^+$,
if $i<j$ or $i>j\geq k$; $trop(b_{ii})=0^-$ for
$i<k$; vectors $b_{k+1},\ldots b_{n}$ are all multiple of $b_k$. Hence, it must
happen that $trop(b_{kk})=0^+$. Then, for $\eta_2,\ldots, \eta_k$
of valuation zero and positive such that $1<<Pc(\eta_2) << Pc(\eta_3)<<\ldots <<
Pc(\eta_k)$ the vector $-b_1-\eta_2b_2-\ldots -\eta_{k-1}b_{k-1}+\eta_kb_k$ is a
lift of $p$ in $V$.
\end{proof}

\begin{theorem}\label{teo:minimal_support_is_tropical basis}
Let $I$ be the ideal of an affine space $V$ in $(\mathbb{K}^*)$. The affine 
polynomials in $I$ with minimal support form a real tropical basis of $I$.
\end{theorem}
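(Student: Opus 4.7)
The inclusion $trop(\mathcal{V}(I))\subseteq\bigcap_\ell\mathcal{T}_\R(\ell)$, where $\ell$ ranges over affine polynomials of minimal support in $I$, is immediate: for any $y\in V$ and any $\ell\in I$ the identity $\ell(y)=0$ forces the tropical minimum of $\ell$ at $trop(y)$ to be attained at least twice with opposite signs. For the converse, fix $p$ in this intersection. By Lemma~\ref{lem:trop-basis-linear}, $trop(V)$ is the intersection of $trop(H)=\mathcal{T}_\R(\ell_H)$ over all hyperplanes $H\supseteq V$, so it suffices to prove $p\in\mathcal{T}_\R(\ell)$ for \emph{every} affine $\ell\in I$, not only those of minimal support.

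The plan is to argue by induction on $|Supp(\ell)|$. The base case, $\ell$ of minimal support, is the standing hypothesis on $p$. For the inductive step, suppose $\ell$ has non-minimal support. Choose $\ell_0\in I$ of minimal support with $Supp(\ell_0)\subsetneq Supp(\ell)$, fix $i_0\in Supp(\ell_0)$, and set $c=a_{i_0}^\ell/a_{i_0}^{\ell_0}$. The polynomial $\tilde\ell=\ell-c\,\ell_0\in I$ then satisfies $Supp(\tilde\ell)\subseteq Supp(\ell)\setminus\{i_0\}$, so $|Supp(\tilde\ell)|<|Supp(\ell)|$. By the inductive hypothesis $p\in\mathcal{T}_\R(\tilde\ell)$, while $p\in\mathcal{T}_\R(\ell_0)$ by the standing hypothesis on $p$.

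The main obstacle is to deduce $p\in\mathcal{T}_\R(\ell)$ from the decomposition $\ell=\tilde\ell+c\ell_0$ together with membership in $\mathcal{T}_\R(\tilde\ell)\cap\mathcal{T}_\R(\ell_0)$, since in general adding two polynomials with a common tropical point can produce cancellations that destroy the opposite-sign witnesses. After translating we may assume $p=(0^+,\ldots,0^+)$. For each monomial $x^j$ the coefficient $a_j^\ell=a_j^{\tilde\ell}+c\,a_j^{\ell_0}$ either has a strictly dominating summand (whose sign it inherits) or two summands of equal valuation, in which case a sign cancellation may strictly raise $v(a_j^\ell)$ and eject $x^j$ from the set of monomials achieving the tropical minimum of $\ell$. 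A case analysis on these monomials should propagate the two opposite signs of $\tilde\ell$ and $\ell_0$ into $\ell$; the pathological configuration, in which systematic cancellations wipe out one of the two required signs, has to be ruled out by performing a further circuit elimination between $\tilde\ell$ and $\ell_0$ to produce a polynomial in $I$ of still smaller support at which $p$ would fail to lie in the real tropicalization, contradicting the inductive hypothesis applied to that shorter polynomial. This oriented-matroid-flavoured sign-and-valuation bookkeeping is the real analogue of the matroidal circuit-elimination argument underlying the complex-case version of the statement, and is the substantive step where the signs make the argument noticeably more delicate than its complex counterpart.
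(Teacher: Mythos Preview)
Your overall strategy is the contrapositive of the paper's, and in principle the two are equivalent; but the proposal stops precisely where the real work begins. In your third paragraph you correctly identify the obstacle --- from $p\in\mathcal{T}_\R(\tilde\ell)\cap\mathcal{T}_\R(\ell_0)$ one cannot in general conclude $p\in\mathcal{T}_\R(\tilde\ell+c\ell_0)$ --- and you propose to resolve it by ``a further circuit elimination between $\tilde\ell$ and $\ell_0$ to produce a polynomial in $I$ of still smaller support at which $p$ would fail to lie in the real tropicalization.'' But you never say what that elimination is, which coefficient to use, or why the resulting shorter polynomial still excludes $p$. Phrases like ``should propagate'' and ``has to be ruled out'' are placeholders, not arguments; this is exactly the substantive sign-and-valuation computation that the theorem requires, and it is absent from the proposal.

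The paper closes this gap with a concrete reduction, running in the direction you switch to at the end (start from $F$ with $p\notin\mathcal{T}_\R(F)$ and shrink the support). After normalizing to $p=(0^+,\ldots,0^+)$ and to minimum coefficient valuation zero, take any $G\in I$ with $Supp(G)\subsetneq Supp(F)$ and arrange (by a preliminary combination) that the residue forms $F_p,G_p$ are independent. If some monomial of $S_G$ lies outside $S_F$, eliminate it from $F$ using $G$ without touching $F_p$. Otherwise $S_G\subseteq S_F$; letting $a_j,b_j$ be the leading coefficients, pick the index $i$ maximizing $|b_i/a_i|$, flip the sign of $G$ so that $b_i<0$, and set $H=(-b_i/a_i)F+G$. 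This choice of $i$ forces every coefficient of the residue form $H_p$ to be nonnegative, while $Supp(H)\subsetneq Supp(F)$; hence $p\notin\mathcal{T}_\R(H)$ and one iterates down to minimal support. That explicit choice of the extremal ratio $|b_i/a_i|$ is the key idea controlling the signs, and it is what your sketch is missing. Without it (or an equivalent mechanism) the induction does not close.
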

\begin{proof}
For each minimal support there is only one linear polynomial in $I$ up to 
multiplication by a constant and they generate $I$. Call $\{F_1,\ldots, F_r\}$ 
this generator set of $I$. Clearly, $trop(V)\subseteq \cap_{i=1}^r 
\mathcal{T}_\R(F_i)$. To see the equality, let $p\notin trop(V)$. Homogenizing 
if necessary and performing a monomial change of coordinates, we may assume, 
without loss of generality that $I$ is homogeneous and that $p=(0^+,\ldots, 
0^+)$. By Lemma~\ref{lem:trop-basis-linear}, there is a linear function $F\in 
V$ such that $p\notin \mathcal{T}_\R(F)$. It suffices to check that we can take 
$F$ with minimal support. If $F$ has no minimal support, then there exists 
another linear $G\in I$ whose support is contained in the support of $F$. 
Multiply $F$ and $G$ by appropriate constants so that the minimum valuation of 
the coefficients of $F$ and of $G$ is zero. By substituting $G$ with a linear 
combination of $F$ and $G$, we may also assume that the residue polynomials at 
$p$ of $F$ and $G$ are linearly independent forms in $\R[x_1, \ldots, x_n]$. 
Let $S_F$ (resp. $S_G$) be the variables where $F$ (resp. $G$) attains the 
minimum at $p$. If there is a monomial $g_mx_m$ in $S_G$ that is not in $S_F$, 
we can use $G$ to make zero the monomial $x_m$ in $F$ without modifying the 
residue polynomial of $F$ at $p$ and proceed recursively.

Assume then that $S_G\subseteq S_F$, after reordering, we may assume that 
$S_F=\{x_{i_1},\ldots, x_{i_r}\}$. Let $a_i$ (resp. $b_i$) be the principal 
coefficient $F$ (resp. $G$) at the monomial $x_i$. Let $i$ be an index in $S_F$ 
such that $|b_i/a_i|$ is maximum of all the indices. If $b_i>0$ substitute $G$ 
by $-G$ so that $b_i<0$. Then, the polynomial $H=F\cdot(-bi/ai)+G$, 
$Supp(H)\subsetneq Supp(F)$ and the residue polynomial of $H$ at $p$ has only 
positive coefficients. We proceed recursively until we arrive to a linear 
polynomial $H'$ with minimal support in $I$ such that $p\notin 
\mathcal{T}_\R(H')$.
\end{proof}

We finish this section showing a real radical ideal that has no tropical basis. 
This \textbf{dissonance} has been studied in \cite{real-radical-tropical}, but 
the example showed there is not real radical.

\begin{example}\label{ex:real_radical_mala_trop}
Consider the cubic defined by:

\[F=x^3+y^3-x^2y-xy^2+2x^2+2y^2+4xy-8x-8y+8\]

The only point with positive coefficients is the singular point $(1,1)$ of 
valuation $(0,0)$.

The corresponding tropical polynomial is
\[f = 0^+v^3\oplus 0^+w^3\oplus 0^-v^2w\oplus 0^-vw^2\oplus 0^+2v^2\oplus 
0^+w^2\oplus 0^+vw\oplus 0^-v\oplus 0^-w\oplus 0^+\]

In the positive tropical quadrant the tropical polynomial defines a line-like 
tropical curve.

\begin{center}
\begin{tikzpicture}
\draw (1,0)--(0,0)--(0,1);
\draw (0,0) -- (-0.7,-0.7);
\draw[fill] (0,0) circle(2pt);
\draw (4,0) -- (3.3,-0.7);
\draw[fill] (4,0) circle(2pt);
\end{tikzpicture}\\
Positive part of a tropical singular cubic (left)\\ and the set of points in 
every tropicalization of a polynomial in $(F)$ (right).
\end{center}

We claim that the curve $C=\{F=0\}$ has no real tropical basis. Since the only 
positive point of the curve is the singular point $(1,1)$, then the only 
tropical positive point is the tropical point $(0^+,0^+)$. The ideal of $C$ is 
$(F)$ and this is a real radical ideal. We are showing that, for every $a<0$, 
$(a^+,a^+)\in \mathcal{T}_\R(FG)$ for every $G\neq 0$, $G\in \K[x,y]$.

Let $G$ be any real polynomial and $a<0$. Let $G_a$ be the residue polynomial 
of $G$ at the point $(a,a)$. Then, 
$(FG)_{(a,a)}=(x^3+y^3-2xy^2-2x^2y)G_a=(x+y)(x-y)^2G_a$. Since $(1,1)$ is a 
root 
of this polynomial, there must be two monomials in $(FG)_{(a,a)}$ with 
different sign 
where $(a,a)$ attains its minimum. That is $(a^+,a^+)\in 
\mathcal{T}_\R(FG)$.

On the other hand, let us take a look at the points of the form $(a^+,0^+)$ and 
$(0^+,a^+)$ with $a>0$. The monomials where the minimum of $(0,a)$ is attained 
are $F_{(0,a)}=x^3 + 2x^2 - 8x + 8$. This polynomial have no positive root, 
hence, by P\'olya's theorem $F_{(0,a)}(1+x)^n$ has only positive monomials.
$(F(1+x)^{11})_{(0,a)}= x^{14} + 13 x^{13} + 69 x^{12} + 195 x^{11} + 308 
x^{10} 
+ 242 x^{9} + 66 x^{8} + 198 x^{7} + 825 x^{6} + 1441 x^{5} + 1441 x^{4} + 903 
x^{3} + 354 x^{2} + 80 x + 8$.

Hence, in the tropical positive quadrant
\[\bigcap_{G}\mathcal{T}_\R(FG) = \{(a^+,a^+)| a\leq 0\}\neq 
trop(C)=\{(0^+,0^+)\}\]
\end{example}

\section{The Real Tropical Discriminant}\label{sec:singular}

In this section we extend the study of tropical singularities and discriminants 
\cite{tropical-discriminant}, \cite{singular-tropical-hypersurfaces}, 
\cite{sing-fixed-point} to the real case.

Consider the algebraic closed field $\K[i]$. Let $A\subseteq \mathbb{Z}^n$. 
$|A|=d$ such that $<A>=\mathbb{Z}^n$. Consider the Laurent polynomial ring 
$\mathbb{K}[i][x_1^{\pm 1},\ldots, x_n^{\pm 1}]$. In this ring we have the 
$\mathbb{K}[i]$-linear space of polynomials with support $A$, $F=\sum_{i\in 
A}a_i x^i$. We may identify any such polynomial with the point $(a_i|i\in A)\in 
\mathbb{K}[i]^d$. Consider the incidence variety
\[H = \{(F,u)\in (\mathbb{K}[i]^*)^d\times (\mathbb{K}[i]^*)^n\ |\ F \textrm{ 
is singular at } u\}\]
of singular hypersurfaces of support $A$ and singular points. This is a 
$\mathbb{Q}$-defined variety. Consider the projections 
$\pi_1:(\mathbb{K}[i]^*)^d \times (\mathbb{K}[i]^*)^n\rightarrow 
(\mathbb{K}[i]^*)^d$ and $\pi_2: (\mathbb{K}[i]^*)^d\times (\mathbb{K}[i]^*)^n 
\rightarrow (\mathbb{K}[i]^*)^n$. $\pi_1(H)$ is the $A$-discriminant variety of 
hypersurfaces with support $A$ and a singular point, while $\pi_2(H) = 
(\mathbb{K}^*)^n$, and the fiber over any point is a linear space isomorphic to 
$\pi_2^{-1}(1,\ldots, 1)=\{F\ |\ F$ is singular at $(1,\ldots,1)\}$. Our aim is 
to describe the real tropicalization of $\pi_1(H)$.

\begin{definition}
Let $f\in \TR[w_1,\ldots, w_n]$ be a tropical polynomial with signs. 
Let $p=(p_1,\ldots, p_n)\in \mathcal{T}_\R(f)$. we say that $p$ is a 
\textbf{singular point} of $f$ if there exists a pair $F\in 
\mathbb{K}[x_1,\ldots, x_n]$ and $P\in \K^n$ such that $P$ is a singular point 
of $\{F=0\}$, $trop(F)=f$ and $trop(P)=p$.
\end{definition}

The definition of tropical Euler derivative 
\cite{singular-tropical-hypersurfaces} can be easily extended to tropical 
polynomials with signs.

\begin{definition}
Let $f = \bigoplus_{\ell\in A} a_\ell w^\ell\in \TR[w_1,\ldots, w_n]$ 
and $<A>=\mathbb{Z}^n$ be a tropical polynomial with signs. Let 
$L=b_0+b_1w_1+\ldots + b_nw_n$ be an affine function with integer coefficients. 
The \textbf{Euler derivative} of $f$ with respect to $L$ is
\[\bigoplus_{\substack{\ell \in A\\ L(\ell)\neq 0}} s(L(\ell)) a_\ell w^\ell.\]
We eliminate all the monomials in the support where $L$ vanish and swap signs 
of the monomials where $L$ is negative. If $F\in \K[x_1,\ldots, x_n]$ is any 
polynomial such that $trop(F)=f$  then
\[\frac{\partial f}{\partial L} = T\left(\frac{\partial F}{\partial 
L}\right)=T\left (b_0F + b_1x_1\frac{\partial 
F}{\partial x_1}+\ldots +b_nx_n\frac{\partial F}{\partial x_n}\right )\]
\end{definition}

\begin{example}
Let $f=0^+ \oplus 0^+w_1 \oplus 0^+w_2 \oplus 0^+ w_1^2\oplus 0^+w_1w_2 \oplus 
0^+w_2^2$. Then $\frac{\partial f}{\partial w_1-w_2} = 0^+w_1 \oplus 0^-w_2 
\oplus 0^+ w_1^2 \oplus 0^-w_2^2$
\end{example}

We have the following consequence of 
Theorem~\ref{teo:minimal_support_is_tropical basis}

\begin{theorem}
Let $f$ be a tropical polynomial with signs. The set of tropical singularities 
of $\mathcal{T}_\R(f)$ is
\[\bigcap_{L} \mathcal{T}_\R\left(\frac{\partial f }{\partial 
L}\right)\]
\end{theorem}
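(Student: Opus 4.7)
The plan is to establish both inclusions. The easy direction is immediate from the definitions: if $(F, P)$ witnesses $p$ as a singular point of $f$, then for every affine integer-coefficient $L = b_0 + \sum_i b_i w_i$, the Euler derivative $\partial F/\partial L = b_0 F + \sum_i b_i x_i \partial F/\partial x_i$ vanishes at $P$ (because $F$ and all the $\partial F/\partial x_i$ do), so $p = trop(P) \in trop(\mathcal{V}(\partial F/\partial L)) \subseteq \mathcal{T}_\R(\partial F/\partial L) = \mathcal{T}_\R(\partial f/\partial L)$.

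For the reverse inclusion, I would fix a Puiseux lift $u \in (\K^*)^n$ of $p$, for instance $u_i := s(p_i) t^{|p_i|}$, and work in the coefficient space $\K^A$ where $A = Supp(f)$. The set $V_u \subseteq \K^A$ of coefficient tuples $(c_j)_{j \in A}$ such that $F = \sum_{j \in A} c_j x^j$ is singular at $u$ is the linear subspace cut out by the hyperplanes $E_L : \sum_{j \in A} L(j) c_j u^j = 0$ as $L$ ranges over affine integer-coefficient forms. An element of the ideal of $V_u$ in the $c_j$'s has the form $\sum_{j\in A} u^j \ell(j) c_j$ for some affine $\ell \in \K[w_1, \ldots, w_n]$, and the condition to have support in a fixed subset $S \subseteq A$ is a rational linear condition on the coefficients of $\ell$. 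Hence every minimal-support linear polynomial in the ideal can be taken (up to scalar, which does not affect the real tropical hypersurface) as $E_{L'}$ for some integer-coefficient $L'$, so Theorem~\ref{teo:minimal_support_is_tropical basis} gives $trop(V_u) = \bigcap_L \mathcal{T}_\R(E_L)$ with $L$ ranging over affine integer-coefficient forms.

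The crux to verify, by a short sign-and-valuation bookkeeping using Corollary~\ref{cor:Kapranov_hyperplane}, is that the coefficient tuple $(a_j)_{j \in A} \in \TR^A$ of $f$ lies in $\mathcal{T}_\R(E_L)$ if and only if $p \in \mathcal{T}_\R(\partial f/\partial L)$: the tropical coefficient of $c_j$ in $E_L$ is $trop(L(j) u^j) = (s(L(j))\, s(p)^j,\, \langle j, |p|\rangle)$, so the sign $s(L(j)) s(a_j) s(p)^j$ and modulus $|a_j| + \langle j, |p|\rangle$ attached to index $j$ when evaluating the tropical hyperplane at $(a_j)_j$ match exactly the data defining $\mathcal{T}_\R(\partial f/\partial L)$. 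Granted this identification, the hypothesis $p \in \bigcap_L \mathcal{T}_\R(\partial f/\partial L)$ places $(a_j)_j$ in $trop(V_u)$, yielding a genuine $F \in V_u$ with $trop(F) = f$; the pair $(F, u)$ then witnesses $p$ as a singular point of $f$. The main obstacle is this sign/valuation matching, together with the observation that although $V_u$ depends on the lift $u$, the derived condition depends only on $p = trop(u)$, so any choice of lift is acceptable.
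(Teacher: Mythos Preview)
Your proof is correct and follows the same strategy as the paper: reduce to the tropical-basis result for the linear space, in coefficient space, of polynomials singular at a chosen lift of $p$, and then identify membership of the coefficient tuple of $f$ in that tropicalization with the Euler-derivative condition on $p$. The only differences are cosmetic: the paper normalizes $p$ to $(0^+,\ldots,0^+)$ and invokes Lemma~\ref{lem:trop-basis-linear} directly, whereas you work at an arbitrary lift, invoke Theorem~\ref{teo:minimal_support_is_tropical basis}, and spell out explicitly (via the rationality of the minimal-support conditions on $L$) why restricting to integer-coefficient $L$ loses nothing---a point the paper leaves implicit.
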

\begin{proof}
If $p$ is a singularity of $\mathcal{T}_\R(f)$, then let $P\in (\K^*)^n$ and 
$F\in \K[x_1,\ldots, x_n]$ be a polynomial, with $trop(P)=p$, $trop(F)=f$, and 
$P$ 
is in the singular locus of $\{F=0\}$. Then, for any $L$, $\frac{\partial 
F}{\partial L}(P)=0$. So $p\in \mathcal{T}_\R(\frac{\partial f}{\partial L})$.

Conversely, let $p\in \bigcap_{L} \mathcal{T}_\R\left(\frac{\partial f 
}{\partial 
L}\right)$. Without loss of generality, assume that $p=(0^+,\ldots, 0^+)$. 
The set tropical polynomials with signs having a singularity at $p$ and support 
$A$ is a linear space that is the tropicalization of the linear system $H_1$ of 
polynomials with support $A$ in $\K[x_1,\ldots, x_n]$ having a singularity at 
$P_1=(1,\ldots, 1)$. Let $F=\sum_{\ell\in A}a_\ell x^\ell$ be the generic 
polynomial with indeterminate coefficients and support $A$. The linear system 
$H_1$ is generated by $F(P_1)$, $(x_1\frac{\partial F}{\partial x_1})(P_1)$, 
$\ldots$, $(x_n\frac{\partial F}{\partial x_n})(P_1)$. By 
Lemma~\ref{lem:trop-basis-linear}, 
\[trop(H_1)=\bigcap_{L}\mathcal{T}_f(\frac{\partial F}{\partial 
L})=\bigcap_{L}\mathcal{T}_f(\frac{\partial f}{\partial L})\]
Hence, there exists $F\in H_1$ with a singularity at $(1,\ldots, 1)$ and $p$ is 
a singularity of $f$.
\end{proof}

\begin{definition}[cf. 
\cite{Bergman-complex},\cite{tropical-discriminant},\cite{sing-fixed-point},
\cite{singular-tropical-hypersurfaces}]
Let $p$ be a point in $\mathcal{T}_\R(f)$. We define the \textbf{flag} of $f$ 
with respect to $p$ as the flag of subsets $\mathcal{F}(p)$ of $A$ defined 
inductively by:
$\mathcal{F}_{-1}=\emptyset \subsetneq \mathcal{F}_0(p) \subsetneq 
\mathcal{F}_1(p) \subsetneq\ldots\subsetneq \mathcal{F}_r(p)$, $dim 
<\!\mathcal{F}_r (p)\!> = n$, and for any $\ell$: $\mathcal{F}_{\ell +1} (p) -
\mathcal{F}_\ell (p)$ is
the subset of $A- <\!\mathcal{F}_\ell(p)\!>$ where the tropical polynomial 
$\oplus_{j \in A-<\!\mathcal{F}_\ell\!>} a_j w^j$ attains its minimum at 
$p$. The weight class of the flag $\mathcal{F}(p)$ are all the points $p'\in 
\mathcal{T}_\R(f)$ for which $\mathcal{F}(p)=\mathcal{F}(p')$. If $p$ and $p'$ 
are in the same weight class and $s(p)=s(p')$ then $p$ is singular if and only 
if $p'$ is singular.
\end{definition}

Let $f\in \TR[w_1,\ldots, w_d]$ and $p=(p_1,\ldots, p_d)\in \mathbb{R}$ be a 
singular point of the usual tropical variety defined by $f$ forgetting signs 
(refer to \cite{singular-tropical-hypersurfaces}.  We want to know if 
$p^+=(p_1^+,\ldots, p_n^+)$ is a singular point of $\mathcal{T}_\R(f)$. Let 
$\mathcal{F}_{-1}= \emptyset \subsetneq \mathcal{F}_0(p)\subsetneq 
\mathcal{F}_1(p)\subsetneq \ldots\subsetneq 
\mathcal{F}_r(p)$ be the \textbf{weight class} of $p$. For any $\mathcal{F}_i$, 
write $\mathcal{F}_i=\mathcal{F}_i^+\cup \mathcal{F}_i^-$, where 
$\mathcal{F}_i^+$ (resp. $\mathcal{F}_i^-$) are the monomials whose 
corresponding tropical coefficient is positive (resp. negative).

\begin{definition}
Let $A,B$ disjoint sets. Let $L$ be a hyperplane, we say that $L$ 
\textbf{separates} $A$ and $B$ if $L$ does not contain $A\cup B$ and the sets 
are contained in different closed halfspaces defined by $L$. That is $(A\cup 
B)\not \subseteq L$, $A\subseteq L_{\geq 0}$ and $B\subseteq L_{\leq 0}$ (or 
$A\subseteq L_{\leq 0}$ and $B\subseteq L_{\geq 0}$).
\end{definition}

Note, that, for example, if $A\subseteq L$ and $B\subseteq L_{\geq0}$ then $L$ 
separates $A$ and $B$. Also, if $A=\emptyset$, any $L$ not containing $B$ 
separates $A$ and $B$.

\begin{theorem}
With the previous notation, let $p\in \TR^n$ be a point with all its 
coordinates 
positive. Then, $p$ is a singular point of $\mathcal{T}_\R(f)$ if and only if 
for all $i=0,\ldots, r-1$ and for all hyperplane $L$ such that 
$\mathcal{F}_{i-1}(p)\subseteq L$, $\mathcal{F}_{i}(p)\not\subseteq L$, $L$ 
does 
not separate $\mathcal{F}_i^+$ and $\mathcal{F}_i^-$.
\end{theorem}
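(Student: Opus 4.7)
The plan is to reduce to the preceding theorem and then translate its condition, level-by-level, through the flag. By that theorem, $p$ is singular if and only if $p\in \mathcal{T}_\R(\partial f/\partial L)$ for every affine function $L=b_0+b_1w_1+\cdots+b_nw_n$. Since all coordinates of $p$ are positive, the sign attached to a monomial $\ell\in A$ in the Euler derivative $\partial f/\partial L$ at $p$ is $s(a_\ell)\cdot s(L(\ell))$, while monomials satisfying $L(\ell)=0$ are deleted from the support. Thus I need to identify, for each $L$, the locus where $\partial f/\partial L$ attains its minimum at $p$ and decide whether all surviving monomials there share a common sign.

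Fix $L$ and let $i=i(L)$ be the smallest index with $\mathcal{F}_i(p)\not\subseteq L$; by the flag structure, $\mathcal{F}_{i-1}(p)\subseteq L$. The core claim is that the minimum of $\partial f/\partial L$ at $p$ is attained exactly on $\mathcal{F}_i(p)\setminus L$. Indeed, $\mathcal{F}_{i-1}(p)\subseteq L$ forces $\langle\mathcal{F}_{i-1}(p)\rangle\subseteq L$, so the surviving support $A\setminus L$ sits inside $A-\langle\mathcal{F}_{i-1}(p)\rangle$; over this larger set the flag definition places the minimum of $f$ at $p$ exactly on $\mathcal{F}_i(p)-\mathcal{F}_{i-1}(p)$, and every monomial outside that set has strictly larger value. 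Intersecting with $A\setminus L$ and using $\mathcal{F}_{i-1}(p)\subseteq L$ reduces the minimum locus to $\mathcal{F}_i(p)\setminus L$, which is nonempty by the very choice of $i$.

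Given this identification, the sign analysis is immediate: the surviving monomials all carry the same sign in $\partial f/\partial L$ if and only if, after possibly swapping $L\leftrightarrow -L$, one has $\mathcal{F}_i^+\setminus L\subseteq \{L>0\}$ and $\mathcal{F}_i^-\setminus L\subseteq \{L<0\}$. Combined with the trivial inclusions $\mathcal{F}_i^\pm\cap L\subseteq \{L=0\}$ and with $\mathcal{F}_i(p)\not\subseteq L$, this is precisely the statement that $L$ separates $\mathcal{F}_i^+$ from $\mathcal{F}_i^-$. Hence $p\notin \mathcal{T}_\R(\partial f/\partial L)$ if and only if $L$ separates $\mathcal{F}_{i(L)}^+$ from $\mathcal{F}_{i(L)}^-$; quantifying over all $L$ and taking the contrapositive yields the claimed equivalence.

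The main technical obstacle is the identification of the minimum locus. One must confirm that no monomial outside the flag can slip into $A\setminus L$ and accidentally attain a smaller value than those of $\mathcal{F}_i(p)\setminus L$; this rests on the fact, implicit in the flag definition, that the successive minimum values $\mu_0<\mu_1<\cdots$ are strictly increasing along the levels. Every monomial outside $\mathcal{F}_i(p)$ either sits inside $\langle\mathcal{F}_{i-1}(p)\rangle\subseteq L$ and is killed, or lives at a strictly higher level and so carries value $|a_\ell|+\langle \ell,|p|\rangle$ strictly greater than $\mu_i$.
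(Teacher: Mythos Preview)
Your proof is correct and follows the same approach as the paper: reduce to the preceding theorem characterizing singularities via Euler derivatives, then for each $L$ identify the level $i=i(L)$ of the flag at which the minimum of $\partial f/\partial L$ is realized and translate the sign condition into the separation condition. Your treatment is in fact more careful than the paper's in one respect: you correctly identify the minimum locus of $\partial f/\partial L$ at $p$ as $\mathcal{F}_i(p)\setminus L$, whereas the paper simply writes $\mathcal{F}_i(p)$; your version is what is actually needed, and the key inclusion $\langle\mathcal{F}_{i-1}(p)\rangle\subseteq L$ that you invoke is exactly what makes the argument go through.

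One small wording issue in your final paragraph: a monomial outside $\mathcal{F}_i(p)$ need not ``live at a strictly higher level'' of the flag, since it may lie in some $\langle\mathcal{F}_j(p)\rangle\setminus\mathcal{F}_j(p)$ and hence never enter the flag at all. But your dichotomy is still valid: such a monomial is either in $\langle\mathcal{F}_{i-1}(p)\rangle\subseteq L$ and killed, or it lies in $A-\langle\mathcal{F}_{i-1}(p)\rangle$ without achieving the minimum there, so by the very definition of $\mathcal{F}_i(p)-\mathcal{F}_{i-1}(p)$ its value strictly exceeds $\mu_i$. Rephrasing that sentence would make the argument airtight.
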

\begin{proof}
Let $L$ be a hyperplane such that $\mathcal{F}_{i-1}(p)\subseteq L$ and $L$ 
separates $\mathcal{F_i}^+$ and $\mathcal{F_i}^-$. Then, the set of monomials 
where $\frac{\partial f}{\partial L}$ attains its minimum at $p$ is 
$\mathcal{F}_i(p)$. But, since $L$ separates $\mathcal{F}_i^+$ and 
$\mathcal{F}_i^-$, all the monomials in $\mathcal{F}_i(p)$ yield the same sign, 
so $p\notin \mathcal{T}_\R(\frac{\partial f}{\partial L})$ and $p$ is not 
singular.

Assume now that $p$ is not a singular point. Then, there exists a $L$ such that 
$p\notin \mathcal{T}_\R(\frac{\partial f}{\partial L})$. The monomials where 
the 
minimum of $\frac{\partial f}{\partial L}$ is attained at $p$ is one of the 
sets 
of the flag $\mathcal{F}_i(p)$. Since $p$ is not in the tropical variety 
defined 
by this derivative, it follows that $L$ separates $\mathcal{F}_i^+$ and 
$\mathcal{F}_i^-$.
\end{proof}

\begin{example}
Consider the real tropical cubic \[f = 0^+v^3\oplus 0^+w^3\oplus 0^-v^2w\oplus 
0^-vw^2\oplus 0^+v^2\oplus 0^+w^2\oplus 0^+vw\oplus 0^-v\oplus 0^-w\oplus 0^+\] 
The weight classes in the positive orthant are: $\{(0^+,0^+)\}$, $\{(a^+, a^+)| 
a<0\}$, $\{(0^+,a^+)| a>0\}$, $\{(a^+,0^+)| a>0\}$. The points in the weight 
class $\{(a^+, a^+)| a<0\}$ are not singular, since they do not belong to the 
real tropical variety defined by $\frac{\partial f}{\partial v+w-3}$. The 
points 
of the form $\{(0^+,a^+)| a>0\}$ are all singular, 
$\mathcal{F}_0^+=\{1,v^2,v^3\}$, $\mathcal{F}_0^-=\{v\}$. No $L$ separates 
$\mathcal{F}_0^+$ and $\mathcal{F}_0^-$ and $(0^+,a^+)$ belongs to the tropical 
curve defined by $\frac{\partial f}{\partial w} = 0^+w^3\oplus 0^-v^2w\oplus 
0^-vw^2\oplus  0^+w^2\oplus 0^+vw\oplus 0^-w$. Similarly $\{(a^+,0^+)| a>0\}$ 
are all singular points. Finally the point $(0^+,0^+)$ is also singular, as 
shown in Example~\ref{ex:real_radical_mala_trop}. For a point of the form 
$(0^+,a^+)$ the polynomial $F=2x^3 + (t^{2a} + t^a - 1)x^2y + (-2t^a - 2)xy^2 + 
y^3 + 2x^2 + 2xy + 2y^2 + (-10)x + (-t^a - 1)y + 6$, with $trop(F)=f$ has a 
singularity in $(1,t^a)$ and a similar result can be obtained for $(a^+,0^+)$.
\end{example}

\begin{example}
Let $p=(0^+,0^+)\in \TR^2$. We would like to describe the real tropical curves 
with a singularity at $p$. We only describe its maximal cones. Such a 
description in the complex case appears in \cite{sing-fixed-point} so we just 
discuss which are the valid distribution of signs on the coefficient of the 
tropical polynomial $f$. Up to symmetry and swap of signs the (maximal) 
combinatorial types of polynomials with a singularity at $p$ are.
\begin{enumerate}
\item $\mathcal{F}_0(p)$ is a two-dimensional circuit (four points, each three 
affinely independent). Then, the positive and negative monomials of the circuit 
cannot be separated by a line.
\item $\mathcal{F}_0(p)$ is a one-dimensional circuit (three collinear points) 
contained in a line $L$. The vertices of the circuit are of the same sign and 
the interior monomial is of the opposite sign. In this case, $\mathcal{F}_1$ 
consists on two monomials, we distinguish two cases:
\begin{enumerate}
\item The monomials of $\mathcal{F}_1$ lie on different halfspaces with respect 
to $L$. In this case, the two monomials have the same sign.
\item Both monomials of $\mathcal{F}_1$ lie on the same halfspace with respect 
to $L$. In this case, the two monomials have different signs.
\end{enumerate}
\end{enumerate}
\end{example}
\begin{center}
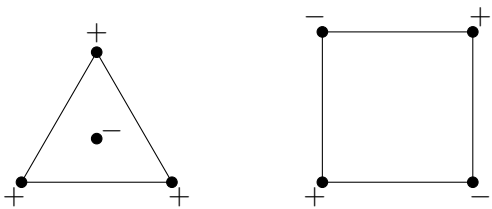
\begin{figure}
\begin{tikzpicture}
\draw (-1,0)--(1,0)--(0,1.73)--(-1,0);
\draw[fill] (0,0.58) circle(2pt) (-1,0) circle(2pt) (1,0) circle(2pt) (0,1.73) 
circle (2pt);
\draw (-1.1,-0.2) node {$+$} (1.1,-0.2) node {$+$} (0,1.99) node {$+$};
\draw (0.2,0.68) node {$-$};
\draw (3,0) -- (5,0) -- (5,2) -- (3,2) -- (3,0);
\draw[fill] (3,0) circle(2pt) (5,0) circle(2pt) (5,2) circle(2pt) (3,2) 
circle(2pt) (3,0);
\draw (2.9,-0.2) node {$+$} (5.1,2.2) node {$+$};
\draw (5.1,-0.2) node {$-$} (2.9,2.2) node {$-$};
\end{tikzpicture}
\caption{Possible distribution of signs on a circuit of dimension 
2.}\label{fig:circuit2}
\end{figure}
\end{center}

\begin{center}
\begin{figure}
\begin{tikzpicture}
\draw (0,1) -- (0,0) -- (0,-1);
\draw[fill] (0,1) circle(2pt) (0,0) circle(2pt) (0,-1) circle(2pt);
\draw (-1,0) circle(2pt) (1,0) circle(2pt);
\draw (0.2,0.2) node {$-$} (0.2,1.2) node {$+$} (0.2,-0.8) node {$+$};
\draw (1.2,0.2) node {$\pm$} (-0.8,0.2) node {$\pm$};

\draw (5,1)--(5,0)--(5,-1);
\draw[fill] (5,1) circle(2pt) (5,0) circle(2pt) (5,-1) circle(2pt);
\draw (6,-0.5) circle(2pt) (6,0.5) circle(2pt);
\draw (5.2,0.2) node {$-$} (5.2,1.2) node {$+$} (5.2,-0.8) node {$+$};
\draw (6.2,0.7) node {$\pm$} (6.2,-0.3) node {$\mp$};
\end{tikzpicture}
\caption{Possible distribution of signs around a circuit of dimension 1. The 
monomials in $\mathcal{F}_1$ are the white circles.}\label{circuit1}
\end{figure}
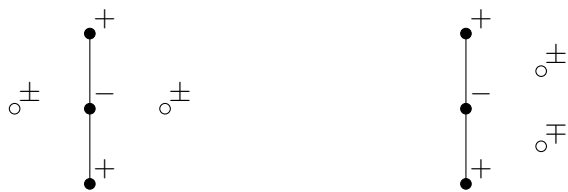
\end{center}

\end{document}